\newtheorem{thm}{Theorem}[section]
\newtheorem{lem}[thm]{Lemma}
\newtheorem{cor}[thm]{Corollary}
\newtheorem{conj}[thm]{Conjecture}
\newtheorem{prop}[thm]{Proposition}
\newtheorem{quest}[thm]{Question}
\newtheorem{example}[thm]{Example}
\newtheorem{remark}[thm]{Remark}
\renewenvironment{proof}[1][\proofname]{%
   \par\pushQED{\qed}\normalfont%
   \topsep6\p@\@plus6\p@\relax
   \trivlist\item[\hskip\labelsep\bfseries#1\@addpunct{.}]%
   \ignorespaces
}{%
   \popQED\endtrivlist\@endpefalse
}
\title{Totally geodesic surfaces in twist knot complements}
\author{Khanh Le and Rebekah Palmer}
\date{}
\begin{document}

\large

\maketitle

\begin{abstract}
In this article, we give explicit examples of infinitely many non-commensurable (non-arithmetic) hyperbolic $3$-manifolds admitting exactly $k$ totally geodesic surfaces for any positive integer $k$, answering a question of Bader, Fisher, Miller and Stover. The construction comes from a family of twist knot complements and their dihedral covers. The case $k=1$ arises from the uniqueness of an immersed totally geodesic thrice-punctured sphere, answering a question of Reid. Applying the proof techniques of the main result, we explicitly construct non-elementary maximal Fuchsian subgroups of infinite covolume within twist knot groups, and we also show that no twist knot complement with odd prime half twists is right-angled in the sense of Champanerkar, Kofman, and Purcell.
\end{abstract} 



\section{Introduction}
\label{sec:Intro}

The study of surfaces has been essential in studying the geometry and topology of the $3$-manifolds that contain them. In this paper, we will mainly be concerned with complete properly immersed totally geodesic surfaces in hyperbolic $3$-manifolds. There has been considerable work in understanding the existence of totally geodesic surfaces in hyperbolic $3$-manifolds. Menasco and Reid \cite[Corollary 4]{MenascoReid} proved that the complement of a hyperbolic tunnel number one knot cannot contain a closed embedded totally geodesic surface. Calegari \cite[Corollary 4.6]{C} showed that a fibered knot complement in a rational homology sphere whose trace field has odd prime degree cannot contain an immersed totally geodesic surface. By Adams and Schoenfeld \cite[Theorem 4.1]{A05}, two-bridge knot and link complements do not contain any embedded orientable totally geodesic surfaces. On the other hand, there are examples of hyperbolic $3$-manifolds that do contain totally geodesic surfaces. Adams \cite[Theorem 3.1]{A85} proved that any incompressible thrice-punctured sphere in a hyperbolic $3$-manifold is totally geodesic. Adams and Schoenfeld \cite[Example 3.1]{A05} exhibited examples of balanced pretzel knot containing a totally geodesic Seifert surface. For arithmetic hyperbolic $3$-manifolds, it is known that if there exists at least one totally geodesic surface, then there are in fact infinitely many totally geodesic surfaces. 

Most recently, Fisher, Lafont, Miller and Stover provided the first examples of hyperbolic $3$-manifolds whose set of totally geodesic surfaces is nonempty and finite \cite{FLMS}. They proved finiteness of maximal totally geodesic submanifolds of dimension at least 2 for a large class of non-arithmetic hyperbolic $n$-manifolds \cite[Theorem 1.2]{FLMS}. In a subsequent work, Bader, Fisher, Miller and Stover showed that if a complete finite-volume hyperbolic $n$-manifold of dimension at least 3 contains infinitely many maximal totally geodesic submanifolds then it must be arithmetic \cite[Theorem 1.1]{BFMS}. A similar result was also obtained for the case of closed hyperbolic $3$-manifolds by Margulis and Mohammadi \cite[Theorem 1.1]{MM}. Bader, Fisher, Miller and Stover \cite[Question 5.4]{BFMS} asked the following natural question, which is the motivation of our paper: 

\begin{quest}
\label{ques:BFMS}
For each $k \geq 1$, is there a hyperbolic $3$-manifold containing exactly $k$ totally geodesic surfaces?
\end{quest}

We answer yes to the above question by exhibiting explicit examples of hyperbolic $3$-manifolds with exactly $k$ totally geodesic surfaces:

\begin{thm}
\label{thm:kTGSurfaces}
Let $k$ be any positive integer. There exist infinitely many non-commensurable hyperbolic $3$-manifolds with exactly $k$ totally geodesic surfaces.
\end{thm}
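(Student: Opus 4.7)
The plan is to produce the examples in two stages: first, establish the $k=1$ case by proving that infinitely many twist knot complements contain exactly one immersed totally geodesic surface, namely a thrice-punctured sphere; second, for arbitrary $k$, pass to carefully chosen dihedral covers in which the preimage of this surface splits into exactly $k$ components while no new totally geodesic surfaces appear.

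For the $k=1$ case, fix a twist knot complement $M_n$ which is non-arithmetic (all but finitely many twist knots qualify). Existence of at least one totally geodesic surface is automatic: $M_n$ contains an immersed incompressible thrice-punctured sphere, which is totally geodesic by Adams \cite{A85}. The substantive step is uniqueness. A totally geodesic surface $S\looparrowright M_n$ corresponds to a conjugacy class of non-elementary Fuchsian subgroups $\Gamma<\pi_1(M_n)\subset\mathrm{PSL}(2,\mathbb{C})$; non-arithmeticity together with \cite{BFMS} already guarantees that only finitely many such classes exist. To reduce this finite list to a single class, I would exploit the explicit two-generator two-bridge presentation of $\pi_1(M_n)$ together with its invariant trace field. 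Since $\Gamma$ preserves a hyperbolic plane, its invariant trace field is a real subfield of the invariant trace field of $\pi_1(M_n)$; combining this real-subfield constraint with trace identities derived from the presentation (and knowledge of which subgroups can contain parabolic elements coming from the cusp) should force $\Gamma$ to be conjugate into the stabilizer of the known thrice-punctured sphere.

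For general $k$, I would construct a regular dihedral cover $\widetilde{M}_n\to M_n$ whose deck group is a dihedral quotient of $\pi_1(M_n)$ chosen so that the induced action on the cosets of the stabilizer of $S$ has exactly $k$ orbits; standard covering-space theory then gives that the preimage of $S$ in $\widetilde{M}_n$ has exactly $k$ connected components. Because every totally geodesic surface in a cover projects to a totally geodesic surface in the base, and $S$ is the unique such surface in $M_n$ by the previous step, the cover $\widetilde{M}_n$ contains exactly $k$ totally geodesic surfaces. To realize infinitely many non-commensurable examples, vary $n$: invariant trace fields of two-bridge knot complements take infinitely many distinct values, so the $M_n$ fall into infinitely many commensurability classes, and this is inherited by the finite covers $\widetilde{M}_n$.

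The chief obstacle is uniqueness in the $k=1$ step: ruling out "exotic" Fuchsian subgroups of $\pi_1(M_n)$ beyond the stabilizer of the thrice-punctured sphere is the heart of the argument, and I expect it to consume most of the work through a mix of trace-field and presentation-based group-theoretic analysis. Once uniqueness is in hand, the dihedral-cover construction is largely combinatorial, and non-commensurability of the base manifolds follows from standard invariant-trace-field calculations for two-bridge knots.
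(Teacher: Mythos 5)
Your overall architecture matches the paper's: show that suitable twist knot complements contain the thrice-punctured sphere $N$ as their unique totally geodesic surface, then manufacture exactly $k$ surfaces in covers, using the fact that a totally geodesic surface in a cover projects to one in the base. But there are two genuine gaps. First, the uniqueness step is deferred almost entirely. Non-arithmeticity plus \cite{BFMS} gives finiteness but no count, and Reid's real-subfield argument only shows that all finite-covolume non-cocompact Fuchsian subgroups of $\Gamma_j$ lie in a single commensurability class --- it does not preclude several non-conjugate such subgroups, i.e.\ several distinct totally geodesic surfaces, within that class. Closing this gap is the paper's main technical content: one combines the boundary-slope restriction of \cref{lem:SlopesOfTGS}, the fact that intersections of totally geodesic surfaces are closed or cusp-to-cusp geodesics, and an explicit computation in $\bbZ[z_j]$ to show that any candidate vertical hyperplane $H_{s/t}$ would self-intersect in a geodesic with an endpoint $\theta_-$ that is forced to be, yet cannot be, a cusp point of $\Gamma_j$. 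Your phrase ``should force $\Gamma$ to be conjugate into the stabilizer'' is precisely the claim that needs proof. Relatedly, ``all but finitely many twist knots qualify'' is too strong: the hypotheses actually needed are that $\bbQ(z_j)$ has odd degree over $\bbQ$ and no proper real subfield, which holds for $j$ an odd prime (and is verified computationally for odd $j\le 99$) but fails for even $j$ and is open in general.

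Second, the covering step as you state it fails for even $k$. A dihedral quotient $D_n$ of $\Gamma_j$ exists only for $n$ dividing $2j+1$, hence only for odd $n$; in the corresponding regular cover the image of $\pi_1(N)$ is the order-two subgroup $\langle\alpha\rangle$, so the preimage of $N$ has exactly $[D_n:\langle\alpha\rangle]=n$ components --- always an odd number, so a regular dihedral cover can never yield an even $k$. The paper's additional idea is to pass to the intermediate (irregular) cover $\hat{M}_j/\langle\alpha\rangle$, where the count becomes the number of $\langle\alpha\rangle$-double cosets in $D_n$, namely $(n+1)/2$; taking $n=2k-1$ and invoking Dirichlet to produce infinitely many primes $p$ with $n\mid 2p+1$ completes the construction. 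Your non-commensurability argument (pairwise distinct invariant trace fields, an invariant preserved under passage to finite covers) is correct and is what the paper uses.
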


These are the first examples of hyperbolic $3$-manifolds for which the number of totally geodesic surfaces is positive, finite, and precisely known. The idea behind \cref{thm:kTGSurfaces} is to find hyperbolic $3$-manifolds with exactly one totally geodesic surface and then consider particular covers of these manifolds. 

We specifically study twist knot complements. For each positive integer $j$, let $K_j$ be the twist knot with exactly $j$ half-twists as shown in \cref{fig:twistknot}. Let $M_j := S^3 \smallsetminus K_j$. These twist knots admit a complete finite-volume hyperbolic metric if and only if $j \geq 2$. In other words for $j\geq 2$, the knot group $\Gamma_j := \pi_1(S^3 \smallsetminus K_j)$ admits a discrete faithful representation $\rho_j$ into $\PSL_2(\bbC)$. By Adams \cite[Theorem 3.1]{A85}, the thrice-punctured sphere $N$, seen as the dotted surface in \cref{fig:twistknot}, is totally geodesic for $j \geq 2$.  For simplification of notation, we will use $N$ to denote this surface, independent of $j$.

\begin{figure}[h!]
    \centering
    \includegraphics[height=1.8in]{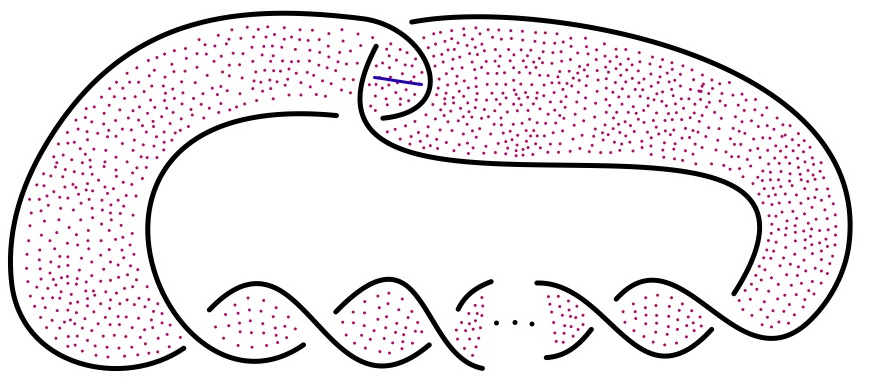}
    \caption{Twist knot $K_j$ and the thrice-punctured sphere $N$}
    \label{fig:twistknot}
\end{figure}

By Reid \cite[Theorem 3]{Reid}, among the twist knots with $j\geq 3$ (all but the figure-8 knot, $j=2$ in our notation), there are infinitely many $j$ so that $M_j$ contains no closed totally geodesic surfaces and has exactly one commensurability class of cusped totally geodesic surfaces. In fact, Reid conjectured that the thrice-punctured sphere is the unique totally geodesic surface inside $M_j$ for $j\geq 3$. The following theorem of our paper confirms this conjecture for infinitely many values of $j$.

\begin{thm}
\label{thm:UniqueThricePuncturedSphere}
Let $M_j$ be the complement of a twist knot whose trace field has odd degree over $\bbQ$ and contains no proper real subfield besides $\bbQ$. Then the thrice-punctured sphere $N$ is the unique totally geodesic surface in $M_j$.   
\end{thm}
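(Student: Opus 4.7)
The plan is to combine Reid's trace-field approach with an orbit-uniqueness step tailored to the one-cusp geometry of $M_j$. Let $S$ be a complete properly immersed totally geodesic surface in $M_j$ and let $\Gamma_S \leq \Gamma_j$ denote the stabilizer in $\rho_j(\Gamma_j)$ of a lift $\widetilde{S} \subset \bbH^3$. The aim is to show that $\widetilde{S}$ lies in the $\Gamma_j$-orbit of $\widetilde{N}$, so that $S = N$.

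The first step is to force $k\Gamma_S = \bbQ$, where $k\Gamma_S$ denotes the invariant trace field. Since $\Gamma_S$ is Fuchsian, it conjugates into $\PSL_2(\bbR)$, so $k\Gamma_S$ is a real subfield of the invariant trace field $k\Gamma_j$. As $M_j$ is a cusped hyperbolic $3$-manifold, the cusp parameter gives a non-real element of $k\Gamma_j$ under the natural embedding, so $k\Gamma_j$ itself is not real. The hypothesis on $k\Gamma_j$ then forces $k\Gamma_S = \bbQ$; for $2$-bridge knots the trace field and invariant trace field coincide, so the phrasing of the theorem matches this conclusion.

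Next I would rule out closed totally geodesic surfaces. If $\Gamma_S$ were cocompact, it would be an arithmetic Fuchsian group whose invariant quaternion algebra $A\Gamma_S$ is a division algebra over $\bbQ$ unramified at the archimedean place. I would look for a contradiction with the embedding $A\Gamma_S \hookrightarrow A\Gamma_j$ using the odd-degree hypothesis on $[k\Gamma_j : \bbQ]$ to control the ramification of $A\Gamma_j$ at rational primes. Thus $\Gamma_S$ is non-cocompact, commensurable with $\PSL_2(\bbZ)$, and every parabolic fixed point of $\Gamma_S$ on $\partial \bbH^3$ lies in the $\Gamma_j$-orbit of the single cusp of $M_j$.

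The main obstacle is the final step of upgrading commensurability to equality: trace-field methods alone place $\Gamma_S$ only in the commensurability class of $\pi_1(N)$, and I must show $\widetilde{S}$ is in the $\Gamma_j$-orbit of $\widetilde{N}$. The boundary circle of $\widetilde{S}$ in $\partial \bbH^3$ is a round circle meeting the $\Gamma_j$-orbit of the cusp in the parabolic fixed points of $\Gamma_S$. I would analyze these circles modulo the $\Gamma_j$-action, using $k\Gamma_S = \bbQ$ and the structure of the peripheral subgroup of $\Gamma_j$ to exclude every such circle except a $\Gamma_j$-translate of the one bounding $\widetilde{N}$. I expect the odd-degree and no-proper-real-subfield hypotheses to be used precisely here, cutting off the Galois-conjugate configurations of boundary circles a priori permitted by $k\Gamma_S = \bbQ$.
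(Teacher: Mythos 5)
Your first two steps are sound and essentially reproduce Reid's argument, which the paper invokes as \cref{prop:NoClosedTG}: integrality of traces plus the odd-degree, no-proper-real-subfield hypothesis forces any Fuchsian subgroup of $\Gamma_j$ to have rational traces, rules out cocompact Fuchsian subgroups via the quaternion algebra, and leaves at most one commensurability class of finite-covolume non-cocompact Fuchsian subgroups. The problem is that your third step --- the one you correctly identify as the main obstacle --- is a statement of intent rather than an argument, and it is precisely where all the content of the theorem lives. ``Analyze the boundary circles modulo the $\Gamma_j$-action and exclude all but the translates of the one bounding $\widetilde{N}$'' is the conclusion to be proven; your outline supplies no mechanism for the exclusion, and the hypotheses you expect to deploy there have already been spent on the commensurability reduction. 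Commensurability (up to conjugacy in $\PSL_2(\bbC)$) with $\Delta_j$ genuinely does not suffice: one must rule out other hyperplanes whose stabilizers would, if of finite covolume, automatically land in that same class.

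The paper's mechanism has two ingredients absent from your plan. First, a boundary-slope computation (\cref{lem:SlopesOfTGS}): any cusped totally geodesic surface not freely homotopic to $N$ has a vertical lift meeting a vertical lift of $N$ along a cusp-to-cusp geodesic, and the trace of a product of two explicit parabolics in its stabilizer yields $\delta^2\bigl(nq\tau^2+(mq+np)\tau+mp\bigr)\in\bbZ$; since $[\bbQ(\tau):\bbQ]=j\geq 3$, the coefficients must vanish, forcing the boundary slopes to be exactly $1/0$ and $-2/1$ and confining the candidate lifts to an explicit family of vertical hyperplanes $H_{s/t}$ with $t$ odd. Second, a self-intersection argument: because such a surface carries both slopes, $H_{s/t}$ and a suitable translate $\zeta(H_{s/t})$ are transverse vertical planes meeting along a geodesic $(\theta_-,\infty)$ with $\theta_-=(sz+4s'+2t)/(tz)$, which by \cref{lem:IntersectionsOfTGS} and \cref{rem:SelfIntersectionOfTGS} would have to cover a cusp-to-cusp geodesic, so $\theta_-$ would have to be a parabolic fixed point of $\Gamma_j$. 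An explicit computation in $\bbZ[z_j]$ --- using that $z_j$ and $\mu_j$ are units, that an odd-degree field contains no square root of a non-square integer, and a reduction mod $t$ of the determinant equation for a putative $\gamma$ with $\gamma(\theta_-)=\infty$ --- shows no such $\gamma$ exists. This contradiction is what eliminates every candidate other than $N$, and it is not recoverable from algebraic invariants of the subgroup alone: the paper shows these stabilizers are nonelementary Fuchsian groups of \emph{infinite} covolume, so without the geometric self-intersection input nothing distinguishes them from $\Delta_j$.
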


Since $M_j$ can be obtained by doing Dehn filling on the Whitehead link, the volume of $M_j$ is uniformly bounded above by that of the Whitehead link. We note that the condition on the trace field in \cref{thm:UniqueThricePuncturedSphere} is the same as the one considered by Reid in \cite[Proposition 2]{Reid}. As noted in \cite[Lemma 5]{Reid}, it was originally due to Riley \cite{Riley} that for infinitely many primes $j$, the trace field of $M_j$ satisfies the condition in \cref{thm:UniqueThricePuncturedSphere}. Later work of Hoste and Shanahan \cite{HS} implies the condition in \cref{thm:UniqueThricePuncturedSphere} holds for the trace field of $M_j$ for all prime $j$. Using Magma \cite{Magma} and SageMath \cite{sage}, we also verify that the condition in \cref{thm:UniqueThricePuncturedSphere} holds for the trace field of $M_j$ for $j$ odd and $j \leq 99$, see \cite{Codes} for the code. Therefore, we obtain the following:

\begin{cor}
\label{cor:ExistUnique}
Suppose $j$ is an odd prime or $j>1$ is an odd number less than or equal to $99$. The thrice-punctured sphere $N$ is the unique totally geodesic surface in $M_j$.
\end{cor}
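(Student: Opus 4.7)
The plan is to deduce \cref{cor:ExistUnique} directly from \cref{thm:UniqueThricePuncturedSphere}. Since the theorem already gives the uniqueness of $N$ once the trace field $k(M_j) := \bbQ(\mathrm{tr}\,\rho_j(\Gamma_j))$ has odd degree over $\bbQ$ and no proper real subfield besides $\bbQ$, my only task is to verify this trace-field condition for the two stated families of $j$: odd primes $j$, and odd integers with $1 < j \leq 99$.

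For odd primes $j$, I would invoke the explicit description of $k(M_j)$ that comes from the Riley polynomial, as organized by Riley \cite{Riley} and Hoste-Shanahan \cite{HS}. Riley's original work established the degree and subfield conditions for infinitely many primes $j$ by showing irreducibility of an explicit polynomial and controlling its Galois group; this is the formulation already recalled in \cite[Lemma 5]{Reid}. The recursive formulas of Hoste-Shanahan upgrade this to a uniform statement covering all odd primes $j$, as noted in the discussion following \cref{thm:UniqueThricePuncturedSphere}. Treating this as a black box, \cref{thm:UniqueThricePuncturedSphere} applies for every odd prime $j$ and yields the corollary in that case.

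For the remaining odd $j$ with $j \leq 99$ (in particular the odd composites in this range), I would perform an explicit finite computer check, as the authors flag in the paragraph before the corollary. Starting from the Hoste-Shanahan recursion for the Riley polynomial, I would compute a defining polynomial for $k(M_j)$, determine its degree over $\bbQ$, enumerate \emph{all} subfields using the subfield routines in Magma \cite{Magma}, and test which of these admit a real embedding using SageMath \cite{sage}. For each odd $j$ in the range the degree turns out to be odd and no proper subfield is real; the code implementing this check and its output are documented in \cite{Codes}. Together with the odd-prime case above this verifies the hypothesis of \cref{thm:UniqueThricePuncturedSphere} for every $j$ permitted by the corollary.

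I do not expect a deep obstacle here, since the corollary is essentially bookkeeping around the theorem. The most delicate point is ensuring completeness of the finite check: one must enumerate all subfields of $k(M_j)$, not just those visibly generated from a chosen primitive element, and test each one individually for a real embedding. Using Magma's built-in subfield enumeration this is routine, but the verification must be run for every odd $j \leq 99$ and logged to confirm that the trace-field hypothesis does hold in each case before \cref{thm:UniqueThricePuncturedSphere} is applied.
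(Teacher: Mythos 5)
Your proposal takes essentially the same route as the paper: for odd prime $j$ the hypothesis of \cref{thm:UniqueThricePuncturedSphere} holds because $[\bbQ(z_j):\bbQ]=j$ is an odd prime by \cite[Theorem 1]{HS}, so the trace field has odd degree and admits no proper subfield besides $\bbQ$ at all, and for the remaining odd $j\leq 99$ the paper performs exactly the Magma/SageMath subfield verification you describe, citing \cite{Codes}. The only simplification worth noting is that in the prime case no Galois-group input is needed --- irreducibility of $\Lambda_j$ together with primality of the degree already forces every intermediate field to be $\bbQ$ or the whole field.
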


The analysis in the proof of \cref{thm:UniqueThricePuncturedSphere} has several interesting consequences. For all odd primes $j$, we give explicit examples of non-elementary maximal Fuchsian subgroup of infinite covolume in $\Gamma_j$. By Maclachlan and Reid \cite[Theorem 4]{MaclachlanReid87}, every non-elementary maximal Fuchsian subgroup of an arithmetic Kleinian group has finite covolume. Therefore, \cref{cor:ExistUnique} illustrates a stark difference between arithmetic and non-arithmetic $3$-manifolds. Infinite covolume Fuchsian groups of this kind are known to exist, for example using results in \cite{FLMS}, but we do not know of any explicit examples in the literature.       

Another consequence of our analysis relates to right-angled knots. A knot in $S^3$ whose complement admits a decomposition into ideal hyperbolic right-angled polyhedra is called a \emph{right-angled knot}. Such a decomposition gives immersed totally geodesic surfaces coming from the faces of the polyhedra which meet at right angles. For $j \geq 3$, our analysis of the geometry of $N$ in $M_j$ gives a strong restriction on the boundary slopes of cusped totally geodesic surfaces. This allows us to show that the angle at an intersection of cusped totally geodesic surfaces in $M_j$ is never a right angle.

\begin{cor}
\label{cor:NoRightAngled}
The twist knot $K_j$ is not right-angled for $j$ odd prime.  
\end{cor}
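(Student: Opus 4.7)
The plan is to argue by contradiction, combining the uniqueness statement in \cref{cor:ExistUnique} with a direct obstruction to orthogonal self-intersection of the thrice-punctured sphere $N$. Suppose $j$ is an odd prime and $K_j$ is right-angled in the sense of Champanerkar, Kofman, and Purcell. Then $M_j$ admits a decomposition into ideal right-angled hyperbolic polyhedra. Each face of such a polyhedron is an ideal totally geodesic polygon that lifts to a hyperbolic plane in $\bbH^3$, and the faces descend to a finite collection of complete properly immersed cusped totally geodesic surfaces in $M_j$; two faces sharing an edge meet at dihedral angle exactly $\pi/2$.

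Next, I would invoke \cref{cor:ExistUnique}: for $j$ an odd prime the thrice-punctured sphere $N$ is the only totally geodesic surface in $M_j$. Hence every face of the hypothetical right-angled decomposition lies on a lift of $N$. Fixing a lift $\widetilde N \subset \bbH^3$, any pair of faces sharing an edge produces two distinct lifts $\widetilde N$ and $\gamma \cdot \widetilde N$, for some $\gamma \in \rho_j(\Gamma_j)$ not stabilizing $\widetilde N$, whose supporting planes meet at dihedral angle $\pi/2$. Thus the problem is reduced to proving that no such $\gamma$ exists: equivalently, that $N$ never meets itself at a right angle inside $M_j$.

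To finish, I would extract from the proof of \cref{thm:UniqueThricePuncturedSphere} the boundary-slope restriction on cusped totally geodesic surfaces in $M_j$. Applied to a pair of lifts of $N$ that both emanate from the knot cusp, this restriction translates the right-angle condition into an algebraic equation on parabolic fixed points and traces of $\rho_j(\Gamma_j)$, that is, an identity that must hold in the trace field of $M_j$. I would then verify that the odd-degree, no-proper-real-subfield hypothesis on this trace field, which holds for $j$ odd prime by the results of Riley and Hoste-Shanahan cited in the introduction, is incompatible with such an identity; this is precisely the flavor of arithmetic constraint that rules out right-angled behavior in the non-arithmetic setting. The main obstacle is exactly this last step: making the boundary-slope formula and the angle-of-intersection formula completely explicit in terms of trace-field arithmetic, and showing that the vanishing of $\cos\theta$ is forbidden by the trace-field hypothesis. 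The earlier steps are essentially formal consequences of the definition of right-angledness and of \cref{cor:ExistUnique}, so the real work lies in carrying out the algebra behind the phrase ``the angle at an intersection of cusped totally geodesic surfaces in $M_j$ is never a right angle.''
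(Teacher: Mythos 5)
Your overall strategy matches the paper's: pass to an ideal vertex of the hypothetical right-angled decomposition, use the boundary-slope restriction of \cref{lem:SlopesOfTGS} together with \cref{prop:ThricePuncturedSphere} to constrain the directions of the faces at the cusp, and show the resulting angle cannot be $\pi/2$. The problem is that the proposal stops exactly where the content of the corollary begins. The steps you actually carry out --- faces of a right-angled decomposition are totally geodesic, by \cref{cor:ExistUnique} they all lie on lifts of $N$, so one must rule out an orthogonal intersection of two such lifts at the cusp --- are, as you say yourself, formal; the step you defer (``making the angle-of-intersection formula completely explicit \dots and showing that the vanishing of $\cos\theta$ is forbidden'') is the entire proof. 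Concretely, the missing computation is: the horospherical cross-section of an ideal right-angled polyhedron at a vertex is a Euclidean right-angled polygon whose sides lie along the translation directions of the only two available slopes $1/0$ and $-2/1$, namely along $1$ and $-2+\tau$ in $\bbC$; the angle between these is $\arg(-2+\tau)=\arg(-1+\tfrac{\tau}{2})=\arg(-2z^{-1})$, and one must show this is never $\pi/2$. The paper does this by citing \cite{HS}; your proposal merely asserts that some trace-field incompatibility of ``precisely this flavor'' exists without producing it, so as written it is not a proof.

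For what it is worth, the field-theoretic mechanism you gesture at does close the gap if you actually run it: $\arg(-2z^{-1})=\pi/2$ forces $z$ to be purely imaginary, hence $z^2\in\bbR$; since $z$ satisfies $X^2-z^2$ over $\bbQ(z^2)$, the degree $[\bbQ(z):\bbQ(z^2)]$ is at most $2$ and divides the odd number $j=[\bbQ(z):\bbQ]$, so it equals $1$ and $\bbQ(z)=\bbQ(z^2)\subset\bbR$, contradicting the fact that $z\notin\bbR$ for a finite-volume hyperbolic $3$-manifold group. Note this uses only the odd-degree hypothesis, not the no-proper-real-subfield hypothesis you invoke. Supplying either this argument or the explicit appeal to the location of the Riley root is what is required to turn your outline into a proof.
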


\cref{cor:NoRightAngled} confirms for infinitely many knots the following conjecture by Champanerkar, Kofman and Purcell \cite[Conjecture 5.12]{CKP}:

\begin{conj}[Champanerkar, Kofman, and Purcell]
\label{conj:NoRightAngledKnot}
There does not exist a right-angled knot.
\end{conj}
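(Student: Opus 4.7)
The plan is to generalize the twist-knot argument of \cref{cor:NoRightAngled}, which deduces non-right-angledness from constraints on the boundary geometry of a single totally geodesic surface. For an arbitrary hyperbolic knot $K \subset S^3$ with complement $M = S^3 \smallsetminus K$, the aim is to show that any totally geodesic face in a hypothetical right-angled ideal polyhedral decomposition of $M$ has cusp boundary slopes that cannot pairwise be orthogonal on the (unique) cusp torus.

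First I would extract the cusp data. A right-angled decomposition $M = \bigcup_i P_i$ yields a collection of totally geodesic $2$-cells meeting at right angles along edges, and the faces asymptotic to the cusp induce a system of Euclidean geodesics on the cusp cross-section $T$ whose pairwise intersections must again be right angles. This forces $T$ to be rectangular and confines every boundary slope arising from the decomposition to one of two perpendicular Euclidean directions. Second, I would invoke the standard fact that boundary slopes of cusped totally geodesic surfaces in $M$ are controlled by the invariant trace field $k(M)$: the translation direction of the associated cusp stabilizer is definable over $k(M)$. The orthogonality requirement then translates into a rationality condition on the cusp shape which, combined with the distinguished role of the meridian as a homological slope in $S^3$, singles out an extremely restrictive class of candidate knots.

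Next I would combine these geometric constraints with known topological obstructions. A right-angled polyhedral decomposition produces closed totally geodesic surfaces via gluings of opposite faces in a suitable finite cover; Menasco--Reid~\cite{MenascoReid} rules out closed embedded totally geodesic surfaces in tunnel-number-one knot complements, and Calegari~\cite{C} excludes immersed totally geodesic surfaces in fibered knot complements whose trace field has odd prime degree. Together with the cusp-shape rigidity above, these cover an infinite family of knots. To handle the remaining knots, a complete proof would assemble a uniform obstruction --- most naturally a cusp-area lower bound derived from the rigidity of right-angled ideal polyhedra and incompatible with the universal cusp-area upper bound for knot complements in $S^3$, in the spirit of Adams's cusp-density estimates.

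The main obstacle is that the conjecture is open in full generality, and each ingredient above is only a partial obstruction: trace-field restrictions constrain but do not pin down boundary slopes, rectangular cusp shapes are not a priori forbidden for knot complements, and the Menasco--Reid and Calegari obstructions each apply only in restricted settings. The decisive missing input would be a universal Euclidean-geometric estimate showing that any right-angled ideal polyhedral decomposition of a knot complement forces a maximal embedded horocusp whose area exceeds what the $6$-theorem or cusp-density results allow for a knot in $S^3$. Isolating such a uniform estimate is precisely where I expect the attempt to stall, and a genuine proof of \cref{conj:NoRightAngledKnot} likely requires a new ingredient beyond the local totally-geodesic-surface analysis used in this paper.
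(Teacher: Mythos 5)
The first thing to say is that \cref{conj:NoRightAngledKnot} is not a theorem of this paper: it is an open conjecture of Champanerkar, Kofman, and Purcell, quoted for context, and the paper proves only the special case \cref{cor:NoRightAngled} (twist knots $K_j$ with $j$ an odd prime). That special case is established by a very concrete two-step argument: \cref{lem:SlopesOfTGS} shows that every cusped totally geodesic surface in $M_j$ has boundary slopes contained in $\{1/0,-2/1\}$, and then one computes that two vertical hyperplanes realizing these two slopes meet the cusp cross-section at the angle $\arg(-2 z_j^{-1})$, which is bounded away from $\pi/2$ by \cite[Theorem 1]{HS}. Your opening step --- that a right-angled decomposition forces pairs of cusped totally geodesic faces to meet orthogonally on the cusp torus --- is exactly the mechanism the paper exploits, but the paper can close the argument only because it has complete control of the boundary slopes and of the cusp shape $\tau_j$ for this specific family. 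There is no proof of the general conjecture to compare against, and you correctly acknowledge at the end that your attempt stalls.

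Beyond that, several intermediate claims in your sketch would not survive scrutiny and should not be presented as available ingredients. A right-angled ideal polyhedral decomposition does not ``produce closed totally geodesic surfaces via gluings of opposite faces in a suitable finite cover''; the faces give immersed totally geodesic surfaces which in a cusped manifold are typically cusped, so the Menasco--Reid obstruction \cite{MenascoReid} (closed embedded surfaces in tunnel-number-one complements) does not engage, and Calegari's criterion \cite{C} only applies to knots with no totally geodesic surfaces at all, which by hypothesis a right-angled knot is not. Orthogonality of the induced geodesics on the cusp torus also does not force the cusp shape to be rectangular in a way that contradicts being a knot in $S^3$; rectangular cusp shapes are not excluded for knot complements. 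Finally, the ``universal cusp-area lower bound'' you hope for as the decisive input is precisely the missing ingredient that nobody has; the known evidence for the conjecture is either the volume/crossing-number verification of \cite{CKP} for knots up to $11$ crossings or family-specific slope computations like the one in this paper. In short: your proposal does not prove the statement, the paper does not claim to either, and the honest content of your write-up is the correct observation that a new idea is needed.
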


Previously known evidence for this conjecture comes from knot complements with no totally geodesic surface, such as knot complements satisfying a condition described by Calegari \cite[Corollary 4.6]{C}. Since the knot $8_{20}$ satisfies \cite[Corollary 4.6]{C}, it does not contain totally geodesic surface and is not right-angled. Examples of knot complements with totally geodesic surfaces supporting \cref{conj:NoRightAngledKnot} have been found among knots with small crossing numbers. In particular, the conjecture was verified for knots with up to 11 crossings by Champanerkar, Kofman, and Purcell \cite{CKP}. As far as we know, \cref{cor:NoRightAngled} provides the first infinitely family of knots that contain at least one totally geodesic surface and are not right-angled.

Finally, our techniques allow us to investigate boundary slopes of totally geodesic surfaces in the figure-8 knot. We prove that any rational number occurs as a boundary slope of a totally geodesic surface in the figure-8 knot in \cref{thm:BoundSlopeQ}.

Our article is organized as follows. In \cref{sec:Prelim}, we collect some facts about twist knot complements. In \cref{sec:MainThms}, we prove \cref{thm:UniqueThricePuncturedSphere} and \cref{thm:kTGSurfaces}. At the end of \cref{sec:MainThms}, we establish \cref{cor:NoRightAngled}. In \cref{sec:Fig8Knot}, we study boundary slopes of totally geodesic surfaces in the figure-8 knot. Finally in \cref{sec:ComputerExperimentsAndQuestions}, we discuss some computer experiments and some open questions.  

\subsection*{Acknowledgements} We would like to thank our advisor Matthew Stover for asking us \cref{ques:BFMS}, for multiple helpful suggestions in proving the main theorems, and for the careful reading of the first draft of this paper. We also thank David Futer and Rose Kaplan-Kelly for pointing out the application of \cref{lem:SlopesOfTGS} towards right-angled knots. Finally, we would like to thank the referee for their comments and their suggestions to use Sage, an open source,  and the initial Sage codes. 


\section{Preliminaries}
\label{sec:Prelim}
\subsection{Twist knots complements}

Let $K_j$ be the twist knot with $j$ half-twists, $M_j:=S^3 \smallsetminus K_j$, and $\Gamma_j$ be its fundamental group. The group $\Gamma_j$ has a presentation $\Gamma_j = \langle a,b \mid b = w_j a w_j^{-1} \rangle$ where $w_j$ is given by a word of length $2j$ in $a^{\pm 1}$ and $b^{\pm 1}$, namely:
\begin{equation}
\label{eq:wj}
w_j  = 
\begin{cases}
(a^{-1}b^{-1} ab)^{(j-1)/2}  a^{-1}b^{-1} & j \equiv 1 \mod 2 \\
(a^{-1}b ab^{-1})^{j/2}  & j \equiv 0 \mod 2
\end{cases}
\end{equation}
The two generators $a$, $b$ correspond to the two meridians of the twist knot chosen to have the same orientation. The longitude of the twist knots that commute with $a$ can be computed diagrammatically and is given by:
\begin{equation}
\label{eq:longitude}
\ell_j = 
\begin{cases}
a \overline{w}_j b^2 w_j a & j \equiv 1 \mod 2 \\
\overline{w}_j w_j  & j \equiv 0 \mod 2 \\
\end{cases}
\end{equation}
where $\overline{w}_j$ is $w_j$ spelled backwards. Finally, we note that the above presentation is different from that in \cite[Equation 2.1]{HS} by an isomorphism interchanging $a$ and $b$.  

%

For $j\geq 2$, $M_j$ admits a complete hyperbolic structure of finite volume. The discrete faithful representation $\rho_j:\Gamma_j \to \SL_2(\bbC)$ sends all conjugates of the meridians of the knot to parabolic isometries; i.e.~conjugate in $\SL_2(\bbC)$ to: 
 
\[\begin{pmatrix} 1 & 1 \\ 0 & 1 \end{pmatrix}\]

\begin{thm}
\label{thm:rho_j}
Suppose we normalize so that:
\begin{equation*}
    \rho_j(a) = \begin{pmatrix}
    1 & 1 \\ 0 & 1
    \end{pmatrix} \quad 
    \text{and} \quad
    \rho_j(b) = \begin{pmatrix}
    1 & 0 \\ z_j & 1
    \end{pmatrix}
\end{equation*}
Then $\rho_j$ defines the discrete faithful representation if and only if $z_j$ satisfies a polynomial $\Lambda_j(z) \in \bbZ[z]$. The polynomial $\Lambda_j(z)$ is defined recursively by
\begin{equation}
    \label{eq:min_poly(z)}
    \Lambda_{j+2}(z) =(z^2+2)\Lambda_j(z) - \Lambda_{j-2}(z)
\end{equation} 	
where $\Lambda_{-1}(z) = \Lambda_0(z) = 1$, $\Lambda_1(z) = z+ 1 $, and $\Lambda_2(z) = z^2 - z + 1$.  
\end{thm}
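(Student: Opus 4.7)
The plan is to identify $\Lambda_j(z)$ with the $(1,1)$-entry of the matrix $W_j := \rho_j(w_j)$, viewed as a polynomial in $z$, and then to derive both the recursion and the characterization of the discrete faithful representation from this identification. The central observation is that, with $A = \rho_j(a)$ and $B = \rho_j(b)$ normalized as in the theorem, the defining relation $b = w_j a w_j^{-1}$ is equivalent to the matrix equation $W_j A W_j^{-1} = B$, which will ultimately collapse to a single polynomial equation in $z$.

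To make this collapse concrete, write $W_j = \bigl(\begin{smallmatrix} p_j & q_j \\ r_j & s_j \end{smallmatrix}\bigr)$ with $p_j s_j - q_j r_j = 1$. A direct computation gives
\[ W_j A W_j^{-1} = \begin{pmatrix} 1 - p_j r_j & p_j^2 \\ -r_j^2 & 1 + p_j r_j \end{pmatrix}, \]
so $W_j A W_j^{-1} = B$ reduces to the two conditions $p_j(z) = 0$ and $r_j(z)^2 = -z$. The first nontrivial step is to verify, as polynomials in $\bbZ[z]$, the divisibility $p_j \mid r_j^2 + z$, so that the second condition is automatic whenever the first holds. Granting this, $\rho_j$ extends to a representation of $\Gamma_j$ precisely when $\Lambda_j(z_j) := p_j(z_j) = 0$, and among the finitely many roots the one corresponding to the discrete faithful representation is pinned down by the existence of the complete hyperbolic structure on $M_j$ together with Mostow rigidity (concretely, by taking the root with positive imaginary part arising from the hyperbolic coordinates).

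For the recursion, I would exploit the structure of the word $w_j$ in (\ref{eq:wj}): for $j$ of a fixed parity, $w_{j+2} = u \cdot w_j$, where $u = a^{-1}b^{-1}ab$ when $j$ is odd and $u = a^{-1}bab^{-1}$ when $j$ is even. A short matrix computation shows that in both cases $\mathrm{tr}(\rho_j(u)) = z^2 + 2$. Applying Cayley--Hamilton to $U := \rho_j(u) \in \SL_2(\bbC)$ yields $U^{k+1} - (z^2+2)\,U^k + U^{k-1} = 0$, and since $W_{j+2k} = U^k W_j$, right-multiplication by $W_j$ gives the matrix identity
\[ W_{j+2} - (z^2+2)\,W_j + W_{j-2} = 0. \]
Extracting the $(1,1)$ entry produces the scalar recursion. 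The initial values $\Lambda_0 = 1$, $\Lambda_1 = z+1$, $\Lambda_2 = z^2 - z + 1$ come from directly computing $W_0 = I$, $W_1 = \rho_j(a^{-1}b^{-1})$, and $W_2 = \rho_j(a^{-1}bab^{-1})$; the formal base value $\Lambda_{-1} = 1$ is realized by $W_{-1} := U^{-1} W_1 = \rho_j((ab)^{-1})$.

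The main obstacle is the polynomial divisibility $p_j(z) \mid r_j(z)^2 + z$ in $\bbZ[z]$, which is what permits the two matrix conditions to coalesce into the single polynomial $\Lambda_j$. I expect to establish this by strong induction on $j$ in tandem with the matrix recursion above, tracking the pair $(p_j, r_j)$ jointly; alternatively, the identity follows from the standard Riley polynomial framework for two-bridge knots, of which $\Lambda_j$ is the instance associated to the twist knot $K_j$. Once this identity is secured, the remainder of the argument is direct matrix bookkeeping.
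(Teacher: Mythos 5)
Your proposal is correct and follows essentially the same route as the paper: Cayley--Hamilton applied to the length-two period of $w_j$ yields the linear recursion on the matrices $W_j$, and $\Lambda_j$ is read off as the $(1,1)$ entry with the same initial data $W_{-1}, W_0, W_1, W_2$. The one step you defer --- the divisibility $p_j \mid r_j^2 + z$ --- is exactly what the paper's inductive computation of the explicit shape of $W_j$ supplies (diagonal entries $\Lambda_j, \Lambda_{j-1}$ and off-diagonal entries $\mu_j, z\mu_j$, so that $r_j^2 + z = z(z\mu_j^2 + 1) = z\Lambda_j\Lambda_{j-1}$ by the determinant condition); the paper sidesteps even this by writing the relation as $w_j a - b w_j = 0$, which collapses directly to $\Lambda_j(z_j) = 0$ once that shape is known.
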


The fact that $z_j$ must be a root of an integral polynomial was first observed by Riley for the class of two-bridge knots \cite[Theorem 2]{Riley}. For the twist knots, a recursion for the polynomials $\Lambda_j(z)$ was given by Hoste and Shanahan \cite[Theorem 2]{HS}. We make the following observation which will be used in the proof of \cref{thm:UniqueThricePuncturedSphere}:

\begin{remark}
\label{rem:zjPowerBasis}
The image of $\Gamma_j$ under $\rho_j$ is contained in $\SL_2(\bbZ[z_j])$. In particular, the entries of the matrices in $\rho_j(\Gamma_j)$ can be written as $\bbZ$-linear combinations of elements in the $z_j$-power basis.
\end{remark}

Since the presentation of $\Gamma_j$ that we are using is slightly different from that in \cite[Equation 2.1]{HS}, we give a proof of the above theorem for completeness.    


\begin{proof}[Proof of \cref{thm:rho_j}] Consider the free group $\F_2$ on two generators $A$ and $B$ along with the surjective homomorphism $\F_2 \to \Gamma_j$ sending $A$, $B$ to $a$, $b$ respectively. Let $W_j$ be the lift of $w_j$ given by \cref{eq:wj}. Consider the homomorphism $P: \F_2 \to \SL_2(\bbZ[z])$ defined by 
\[P(A) = \begin{pmatrix}
1 & 1 \\ 0 & 1 \\
\end{pmatrix}
\quad \text{and} \quad
P(B) = \begin{pmatrix} 1 & 0 \\ z & 1 \end{pmatrix}\]
We claim that
\begin{equation}
\label{eq:Matrix_Wj}
P(W_j) = 
\begin{pmatrix}
\Lambda_j(z) & \mu_j(z) \\ z \mu_j(z) & \Lambda_{j-1}(z) 
\end{pmatrix}
\end{equation}
for some $\mu_j(z) \in\bbZ[z]$.

For convenience of notation, we identify elements of $\F_2$ with their image under $P$. We consider two cases according to whether $j$ is even or odd. When $j$ is even, we have $W_j = ([A^{-1},B])^{j/2}$. Applying Cayley--Hamilton to $[A^{-1},B]$, we get 
\[
	([A^{-1},B])^2 - \tr([A^{-1},B]) [A^{-1},B] + I = 0
\]
Note that $\displaystyle \tr([A^{-1},B]) = z^2 + 2$. Therefore, we have 
\begin{equation}
\label{eq:W_j_Recur}
	W_{j+2} = (z^2+2) W_{j} - W_{j-2}
\end{equation}
This gives us the recursion in \cref{eq:min_poly(z)}. Now we observe that the upper diagonal (resp.~lower diagonal) entries of $W_2= [A^{-1},B]$ and of $W_0 = I$ give us the initial conditions for $\Lambda_j(z)$ (resp.~$\Lambda_{j-1}(z)$). For the off-diagonal entries, we observe that they satisfy the same recursion as in \cref{eq:min_poly(z)} and their initial conditions differ by a factor of $z$. When $j$ is odd, we proceed similarly to compute $P(W_j)$ where the initial conditions are given by $W_{-1} = B^{-1}A^{-1}$ and $W_1 = A^{-1}B^{-1}$.

To finish the proof, we note that $\rho_j$ is obtained by factoring $\ev_j\circ P$ through the canonical projection $\F_2 \to \Gamma_j$ where $\ev_j:\SL_2(\bbZ[z])\to \SL_2(\bbZ[z_j])$ is the homomorphism induced by the evaluation map $\bbZ[z] \to \bbZ[z_j]$. Using \cref{eq:Matrix_Wj}, we see that the relation $w_j a - b w_j =0$ is satisfied if and only if $\Lambda_j(z_j) = 0$.
\end{proof}

Given a finitely-generated non-elementary subgroup $\Gamma$ of $\PSL_2(\bbC)$, we can define the \emph{trace field} of $\Gamma$ to be $\bbQ(\tr \Gamma) := \bbQ(\tr \gamma \mid \gamma \in \Gamma)$ and the \emph{invariant trace field} of $\Gamma$ to be $k\Gamma := \bbQ(\tr \gamma^2 \mid \gamma \in \Gamma)$. If $\Gamma$ additionally has finite covolume, then $\bbQ(\tr \Gamma)$ and $k\Gamma$ are number fields \cite[Theorem 3.1.2]{MaclachlanReid}. In general, the trace field and invariant trace field are different. The trace field is not an invariant of the commensurability class of $\Gamma$, but the invariant trace field is. These fields may coincide, such as link complements in a $\bbZ/2$-homology sphere. \cite[Corollary 4.2.2]{MaclachlanReid}. Finally, suppose that $\Gamma$ is generated by $\gamma_1$ and $\gamma_2$; then $\bbQ(\tr \Gamma) = \bbQ(\tr \gamma_1, \tr \gamma_2,  \tr \gamma_1\gamma_2)$ \cite[Equation 3.25]{MaclachlanReid}.

It follows from the discussion above and \cref{thm:rho_j} that the trace field of twist knot complements is $\bbQ(z_j)$. Hoste and Shanahan proved that $\Lambda_j(z)$ is the monic minimal polynomial of $z_j$ over $\bbZ$ of degree $j$ \cite[Theorem 1]{HS}. Therefore, $[\bbQ(z_j):\bbQ] = j$ and so twist knots are pairwise non-commensurable.

\begin{cor}
\label{cor:Matrixwj}
Under the discrete faithful representation $\rho_j$, we have
\begin{equation*}
\rho_j(w_j) = 
	\begin{pmatrix}
	0 & \mu_j \\ z_j \mu_j & [(-1)^{j}z_j-2]\mu_j \\
	\end{pmatrix}
	\quad \text{and} \quad 
\rho_j(\overline{w}_j) = 
	\begin{pmatrix}
	 [(-1)^{j}z_j-2]\mu_j & \mu_j \\ z_j \mu_j & 0  \\
	\end{pmatrix}
\end{equation*}
where $\mu_j = \mu_j(z_j)$. Furthermore, the image of $\ell_j$ is given by: 
\begin{equation*}
\label{eq:longitude-rho}
\rho_j(\ell_j) = 
\begin{pmatrix}
-1 & -\tau_j \\ 0 & -1
\end{pmatrix}
\end{equation*}
where $\tau_j= 4\mu_j^2 + 2$.
\end{cor}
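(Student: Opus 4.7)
The plan is to derive the three matrix formulas from \cref{thm:rho_j} together with one polynomial identity in $\bbZ[z]$, after which everything reduces to $2 \times 2$ matrix arithmetic. By \cref{thm:rho_j} and \cref{eq:Matrix_Wj},
\[
\rho_j(w_j) = \begin{pmatrix} \Lambda_j(z_j) & \mu_j \\ z_j \mu_j & \Lambda_{j-1}(z_j) \end{pmatrix},
\]
and since $\Lambda_j(z_j) = 0$ and $\det \rho_j(w_j) = 1$, the $(1,1)$-entry vanishes and $z_j \mu_j^2 = -1$, an identity that will be used repeatedly.

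The core of the argument is the polynomial identity
\[
\Lambda_{j-1}(z) - [(-1)^j z - 2]\mu_j(z) = (-1)^j \Lambda_j(z)
\]
in $\bbZ[z]$, which when evaluated at $z_j$ gives $\Lambda_{j-1}(z_j) = [(-1)^j z_j - 2]\mu_j$ and thus completes the $(2,2)$-entry of $\rho_j(w_j)$. I would prove this identity by induction on $j$ in steps of two: both sides satisfy the three-term recursion $X_{j+2}(z) = (z^2+2) X_j(z) - X_{j-2}(z)$, since $\Lambda_j$ satisfies \cref{eq:min_poly(z)}, $\mu_j$ inherits the same recursion from \cref{eq:W_j_Recur}, and the factor $(-1)^j$ is unchanged under $j \mapsto j+2$. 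Thus only the base cases $j \in \{0,1,2,3\}$ need be checked by direct substitution, using the initial matrices $W_0 = I$, $W_{\pm 1}$, and $W_2 = [A^{-1},B]$ from the proof of \cref{thm:rho_j}.

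For $\rho_j(\overline{w}_j)$, I would use the diagonal-swap involution $\tilde M = \begin{pmatrix} d & b \\ c & a \end{pmatrix}$ on $\SL_2(\bbC)$ associated to $M = \begin{pmatrix} a & b \\ c & d \end{pmatrix}$. A four-entry check gives $\widetilde{M_1 M_2} = \tilde M_2 \tilde M_1$, and since each of $\rho_j(a^{\pm 1})$ and $\rho_j(b^{\pm 1})$ is fixed by this involution, a straightforward induction on word length yields $\rho_j(\overline{w}) = \widetilde{\rho_j(w)}$ for every word $w$ in the generators; applied to $w = w_j$, this produces the claimed form by swapping the $(1,1)$- and $(2,2)$-entries. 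The longitude formula then follows directly from \cref{eq:longitude}: when $j$ is even, $\rho_j(\ell_j) = \widetilde{\rho_j(w_j)}\,\rho_j(w_j)$ is a single matrix product; when $j$ is odd, $\rho_j(\ell_j) = \rho_j(a)\,\widetilde{\rho_j(w_j)}\,\rho_j(b)^2\,\rho_j(w_j)\,\rho_j(a)$ requires a few more multiplications. Both cases reduce, using $z_j \mu_j^2 = -1$ and the formula for the $(2,2)$-entry, to $\begin{pmatrix} -1 & -\tau_j \\ 0 & -1 \end{pmatrix}$ with $\tau_j = 4\mu_j^2 + 2$.

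The main obstacle is the polynomial identity in the second step. The recursive induction itself is mechanical once both sides are seen to satisfy the same three-term recursion, but since $\mu_j$ has different initial conditions in the two parities of $j$ (coming from $W_0, W_2$ versus $W_{\pm 1}$) and the sign $(-1)^j$ alternates between them, the base cases for even and odd $j$ must be checked separately with some bookkeeping. Once that identity is in hand, everything else is routine $2 \times 2$ matrix algebra.
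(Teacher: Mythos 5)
Your proof is correct and takes essentially the same route as the paper: the same key polynomial identity $(-1)^{j+1}\Lambda_j(z) + \Lambda_{j-1}(z) = [(-1)^j z - 2]\mu_j(z)$, proved by the same two-step induction on the three-term recursion, the same relation $z_j\mu_j^2 = -1$ extracted from a determinant, and the same direct evaluation of the longitude word. The only (harmless) variation is that you obtain the matrix of $\overline{w}_j$ via the anti-automorphism $M \mapsto JM^{T}J$ (with $J$ the coordinate swap) fixing the generators, where the paper instead observes that $\overline{W}_j$ satisfies the same recurrence with base cases $\overline{W}_0$, $\overline{W}_1$.
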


\begin{proof}
Let us consider $\overline{W}_j$ to be the lift of $\overline{w}_j$ in $\F_2$ given by spelling $W_j$ backwards. We first claim that:
\[
P(\overline{W}_j) =  \begin{pmatrix}
\Lambda_{j-1}(z) & \mu_j(z) \\ z \mu_j(z) & \Lambda_{j}(z)
\end{pmatrix} 
\]
Observe that $\overline{W}_j$ satisfy the same recurrence in \cref{eq:W_j_Recur}, where $\overline{W}_0$, $\overline{W}_1$ provide the base case. 

We next claim that: 
\[ (-1)^{j+1}\Lambda_{j}(z) + \Lambda_{j-1}(z) = [(-1)^j z - 2] \mu_j(z)\]
We proceed by induction on $j$. The base case is provided by $W_0 = I$ and $W_1 = A^{-1}B^{-1}$. Assume the statement for $W_k$ for all $k< j+2$. We have 

\begin{align*}
(-1)^{j+3}\Lambda_{j+2} (z)+ \Lambda_{j+1}(z) 
	&= (z^2 +2) [(-1)^{j+3}\Lambda_j(z) + \Lambda_{j-1}(z)])  - [(-1)^{j+3}\Lambda_{j-2}(z) + \Lambda_{j-3}(z)] \\
	&=(z^2 + 2)[(-1)^jz-2]\mu_j(z) - [(-1)^j z-2]\mu_{j-2}(z)\\
	&= [(-1)^{j+2}z-2]\mu_{j+2}(z) 
\end{align*}
where the last equality follows from the recurrence of $\mu_j(z)$ given by \cref{eq:W_j_Recur}. Therefore, we have
    \[ \Lambda_{j-1}(z_j) = [(-1)^jz_j-2]\mu_j \]
This identity gives us the image of $w_j$ and $\overline{w}_j$ under $\rho_j$. We note that by the determinant of $P(\overline{W}_j)$,
\begin{equation}
\label{eq:zj_muj}
    z_j \mu_j^2 = -1
\end{equation}

Applying $\rho_j$ to \cref{eq:longitude} and using \cref{eq:zj_muj}, we get the image of $\ell_j$ under $\rho_j$ where 
\begin{equation}
\label{eq:tau}
\tau_j = (4\mu_j^2 + 2)=-\frac{4}{z_j} + 2
\end{equation}
\end{proof}

\begin{remark}
Observe that since $\mu_j \in \bbZ[z_j]$, \cref{eq:zj_muj} implies that $z_j$ and $\mu_j$ are units in $\bbZ[z_j]$. We also note that Equations \eqref{eq:zj_muj} and \eqref{eq:tau} are used extensively in the proof of \cref{thm:UniqueThricePuncturedSphere}. 
\end{remark}

We end this subsection by defining boundary slopes for cusped totally geodesic surfaces in $M_j$. Let us identify the universal cover $\bbH^3$ of $M_j$ with the upper-half space model and the ideal boundary of $\bbH^3$ with $\bbC \cup \{\infty\}$. The action of $\Gamma_j$ on $\bbH^3$ is given by $\rho_j$. We say that $P\in \bbC \cup \{\infty\}$ is a \emph{cusp point} of $M_j$ if its stabilizer in $\Gamma_j$ is conjugate to $\langle a ,\ell_j \rangle$. In particular, the point $\infty$ is the cusp point of $M_j$ whose stabilizer in $\Gamma_j$ is the $\bbZ^2$-subgroup generated by $a$ and $\ell_j$ acting as translations by $1$ and $\tau_j$ respectively. At every cusp point in $\bbH^3$, we choose a sufficiently small horoball neighborhood such that the stabilizer is a conjugate of $\langle a, \ell_j\rangle$ and such that these horoball neighborhoods are invariant under the action of $\Gamma_j$. We shall refer to the image of these horoball neighborhood as the \emph{cusp} of $M_j$. Similarly given any hyperplane $H$ in $\bbH^3$, a point $P$ in the circle/line at infinity of $H$ is called a cusp point of $H$ if it is a fixed point of a parabolic element in $\Stab_{\Gamma_j}(H)$.

Let $\Sigma$ be a properly immersed cusped totally geodesic surface in $M_j$. Since $M_j$ has one cusp and $\Sigma$ is complete, all cusps of $\Sigma$ must be contained in the cusp of $M$. Given any cusp $c$ of $\Sigma$, we consider a lift $\widetilde{\Sigma}$ of $\Sigma$ to $\bbH^3$ by putting the cusp point corresponding to $c$ at $\infty$. The cusp cross-section of $c$ in $\Sigma$ is obtained by intersecting $\Sigma$ with the cusp cross-section of $M$. This intersection lifts to a horocycle at $\infty$ in $\widetilde{\Sigma}$. The stabilizer of this horocycle is an element of the form $a^p \ell_j^q$ in $\langle a,\ell_j  \rangle$. In this case, we say that $p/q$ is a \emph{boundary slope} of $\Sigma$. 

More generally, suppose that $\zeta$ is a cusp point of a hyperplane $H$ in $\bbH^3$ such that the parabolic element in the stabilizer of $H$ in $\Gamma_j$ fixing $\zeta$ is conjugate to $a^p\ell_j^q$ in $\Gamma_j$. Then we say that $p/q$ is the boundary slope of $H$ at $\zeta$.  

\subsection{The thrice-punctured sphere $N$}

For all $j \geq 2$,  $M_j$ contains an immersed thrice-punctured sphere $N$ as shown in \cref{fig:twistknot}. By Adams \cite[Theorem 3.1]{A85}, $N$ is a totally geodesic surface. Moreover:

\begin{prop}
\label{prop:ThricePuncturedSphere}
For all $j \geq 2$, the immersed surface $N$ is isotopic to a totally geodesic thrice-punctured sphere in $M_j$ whose fundamental group is generated by $x:=a$ and 

\begin{equation*}
	y_j := 
	\begin{cases}
	a w_j a b a^{-1} w_j^{-1} a^{-1} &\text{if } j \equiv 1 \mod 2\\
	a w_j a^{-1} b a w_j^{-1} a^{-1} &\text{if } j \equiv 0 \mod 2\\
	\end{cases}
\end{equation*}
The set of boundary slopes of $N$ is $\{1/0,1/0,-2/1\}$.
\end{prop}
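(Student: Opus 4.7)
The plan is to establish three assertions: $N$ is isotopic to a totally geodesic thrice-punctured sphere, $\pi_1(N)$ is generated by $x = a$ and $y_j$ as stated, and the boundary slopes are as listed. The first is immediate from the fact, already recalled in the introduction, that any incompressible thrice-punctured sphere in a hyperbolic $3$-manifold is isotopic to a totally geodesic one \cite{A85}; the incompressibility of $N$ in $M_j$ is visible from \cref{fig:twistknot}.

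For the identification of the generators, I would read off loops around the three punctures of $N$ directly from \cref{fig:twistknot}. Each puncture is a transverse intersection with $K_j$, so the peripheral loop there is conjugate to a meridian in $\Gamma_j$. Choosing the basepoint so that the first puncture yields $x = a$, the second puncture lies on the strand carrying the meridian $b$, but reaching it from the basepoint requires traveling through the twist region. The conjugator one collects along that path is $a w_j a^{\pm 1}$, where the sign depends on the parity of $j$ (reflecting how the strand emerges from the twist region); applied to $b$, this produces the two case formulas for $y_j$. Since $\pi_1$ of a thrice-punctured sphere is free of rank two with peripheral classes $x$, $y_j$, and $(xy_j)^{-1}$, the two elements $x$ and $y_j$ generate $\pi_1(N)$.

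For the boundary slopes, $x = a$ trivially has slope $1/0$. For $y_j$, one rearranges the defining formula as $y_j = (a w_j)(a^{\pm 1} b a^{\mp 1})(a w_j)^{-1}$, and combined with $b = w_j a w_j^{-1}$ this exhibits $y_j$ as an explicit conjugate of $a$ in $\Gamma_j$. Consequently, conjugating in $\Gamma_j$ to send the fixed point of $y_j$ to $\infty$ sends $y_j$ to $a \in \langle a, \ell_j\rangle$, giving slope $1/0$. For the third slope, the plan is to compute $\rho_j(xy_j)$ directly using \cref{cor:Matrixwj} and identify the conjugate in $\langle a, \ell_j\rangle$ to which it is sent. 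Using $\tau_j = -4/z_j + 2$, one finds $\rho_j(a^{-2}\ell_j) = \begin{pmatrix} -1 & 4/z_j \\ 0 & -1 \end{pmatrix}$, corresponding to translation by $-2 + \tau_j = -4/z_j$ on the cusp lattice $\bbZ \oplus \tau_j \bbZ$. The slope $-2/1$ for the third boundary will then follow once the matrix computation shows that $\rho_j(xy_j)$ conjugates into the cusp subgroup with trace $-2$ and translation length $-4/z_j$.

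The main obstacle is this final matrix calculation. Since $y_j$ takes different forms according to the parity of $j$, each case must be handled separately, and the intermediate multiplications involve $\rho_j(w_j)$ whose entries are polynomials in $z_j$. The saving grace is that only two invariants of $\rho_j(xy_j)$ are actually needed—the trace and the translation length after conjugating into the cusp subgroup—and both collapse to the target values via the identities $z_j \mu_j^2 = -1$ and $\tau_j = -4/z_j + 2$ established in \cref{cor:Matrixwj}, reducing the polynomial expressions in $z_j$ to the simple rational form matching $\rho_j(a^{-2}\ell_j)$.
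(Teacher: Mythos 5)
Your proposal is correct and follows essentially the same route as the paper: Adams' theorem for total geodesicity, reading the generators off the diagram, observing that $x$ and $y_j$ are both conjugates of $a$ (giving slope $1/0$ twice), and identifying the third peripheral element with a conjugate of $a^{-2}\ell_j$. For that last step the paper sidesteps most of the matrix work via the word identity $x^{-1}y_j^{-1} = b^{-2}w_j\ell_j w_j^{-1}$, which the relation $b = w_j a w_j^{-1}$ conjugates (by $w_j$) to $a^{-2}\ell_j$; your planned computation of $\rho_j(xy_j)$ would have to produce such a conjugator lying in $\Gamma_j$ anyway, since trace and translation amount alone do not pin down the slope without one.
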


\begin{proof}
From the diagram, we see that the fundamental group of $N$ is generated by $b$ and:
\begin{equation*}
\begin{cases}
a^{-1}(bab^{-1}a^{-1})^{(j-1)/2} bab^{-1}(aba^{-1}b^{-1})^{(j-1)/2}a
& j \equiv 1 \mod 2
\\
(b^{-1}aba^{-1})^{j}b^{-1}ab(ab^{-1}a^{-1}b)^{j}
& j \equiv 0 \mod 2
\end{cases}
\end{equation*}
Thus, we see that $\pi_1(N)$ is conjugate to the subgroup $\Delta_j$ generated by $x:=a$ and $y_j$ as stated. Under the discrete faithful representation, we have
\begin{equation*}
	\rho_j(\Delta_j) = \left\langle \begin{pmatrix}
	1 & 1 \\ 0 & 1
	\end{pmatrix}, 
	\begin{pmatrix}
	-1 & 1 \\ -4 & 3 \\
	\end{pmatrix}
	\right \rangle
\end{equation*}
which is conjugate to the principal congruence subgroup of level 2 of $\PSL_2(\bbZ)$.

Note that since both $x$ and $y_j$ are conjugate to $a$, the surface $N$ has two slopes 1/0. The parabolic element corresponding to the remaining cusp is given by 
\begin{equation*}
	x^{-1} y_j^{-1} = 
	\begin{pmatrix}
	-1 & 0 \\ 4 & -1 
	\end{pmatrix} = b^{-2} w_j \ell_j w_j^{-1}
\end{equation*}  
Since $b^{-2} w_j \ell_j$ and $a^{-2} \ell_j$ are conjugate to each other in $\Gamma_j$, the slope of the remaining cusp is $-2/1$. 
\end{proof}	

\begin{remark}
\label{rmk:ElementsOfDelta}
Let $\bbH^2_\bbR$ be the hyperplane in $\bbH^3$ such that the line at infinity is the real line $\bbR$. The group $\rho_j(\Delta_j)$ stabilizes $\bbH^2_\bbR$ in $\bbH^3$. In fact, it is conjugate to the principal congruence subgroup of level 2 of $\PSL_2(\bbZ)$ by the matrix
\[ \begin{pmatrix} 1/\sqrt{2} & 0 \\ 0 & \sqrt{2} \end{pmatrix}\]
Thus, elements of $\rho_j(\Delta_j)$ are precisely matrices in $\PSL_2(\bbZ)$ such that the lower diagonal entry is congruent to $0 \mod 4$.
This group is also known as the Hecke congruence subgroup of level $4$.
\end{remark}


We will also need the following fact about the action of $\rho_j(\Delta_j)$ on $\bbH_\bbR^2$. 

\begin{lem}
\label{lem:CuspsOrbits}
Consider the action of $\rho_j(\Delta_j)$ on $\bbH^2_\bbR$. The cusp points of this action is $\bbQ \cup \{\infty\}$, which is partitioned into 3 distinct orbits, namely
\begin{align*}
	[0] &= \{u/v \in \bbQ \mid v \equiv 1 \mod 2\}, \\
	[\tfrac{1}{2}] &= \{u/v \in \bbQ \mid v \equiv 2 \mod 4 \}, \\
	[\infty] &= \{ u/v \in \bbQ \mid v \equiv 0 \mod 4\} \cup \{\infty\} 
\end{align*} 
where $u,v \in \bbZ$ are relatively prime.	
\end{lem}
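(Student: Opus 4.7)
The plan is to leverage \cref{rmk:ElementsOfDelta}, which identifies $\rho_j(\Delta_j)$ as the Hecke congruence subgroup of level $4$ inside $\PSL_2(\bbZ)$, i.e.~the set of $\gamma = \begin{pmatrix} a & b \\ 4c & d \end{pmatrix}$ with $ad - 4bc = 1$. Since $\rho_j(\Delta_j)$ has finite index in $\PSL_2(\bbZ)$, which acts transitively on $\bbQ \cup \{\infty\}$, the set of cusp points of $\rho_j(\Delta_j)$ acting on $\bbH^2_\bbR$ is exactly $\bbQ \cup \{\infty\}$. It then suffices to verify that the three sets in the statement partition this set (clear by sorting the denominator modulo $4$, with $\infty$ treated as $v = 0$) and that each set is a single orbit.

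Next I would compute the action of a general $\gamma \in \rho_j(\Delta_j)$ on the three proposed representatives:
\[
\gamma \cdot \infty = \frac{a}{4c}, \qquad \gamma \cdot 0 = \frac{b}{d}, \qquad \gamma \cdot \tfrac{1}{2} = \frac{a+2b}{2(2c+d)}.
\]
The determinant relation $ad - 4bc = 1$ forces $a, d$ to be odd and gives the coprimality of each numerator-denominator pair, directly in the first two cases and via the identity $a(2c+d) - 2c(a+2b) = 1$ in the third. Reading off denominator parities shows that each orbit is contained in the claimed set, so only the reverse inclusions remain.

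For the reverse inclusion, given a reduced fraction $u/v$ with the prescribed denominator condition, I would produce a matrix $\gamma \in \rho_j(\Delta_j)$ sending the representative to $u/v$ using B\'ezout. If $v = 4c$, the condition $\gcd(u, 4c) = 1$ yields $b, d$ with $ud - 4bc = 1$, and the resulting matrix sends $\infty$ to $u/v$; if $v$ is odd, the symmetric calculation with $\gcd(4u, v) = 1$ handles $0$. The case $v \equiv 2 \bmod 4$ is the main obstacle I anticipate: writing $v = 2m$ with $m$ odd forces $u$ odd and $\gcd(u, m) = 1$, so $\gcd(2u, m) = 1$ permits solving $am - 2uc = 1$; one then sets $b = (u-a)/2$ and $d = m - 2c$ and verifies integrality of $b$ (using that $am = 1 + 2uc$ with $m$ odd forces $a$ odd, matching the parity of $u$). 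This produces the element of $\rho_j(\Delta_j)$ sending $\tfrac{1}{2}$ to $u/v$ and completes the classification.
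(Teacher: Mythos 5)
Your proof is correct, and its core is the same as the paper's: both arguments hinge on producing, via B\'ezout, an explicit matrix with lower-left entry divisible by $4$ carrying the representative $0$, $\tfrac12$, or $\infty$ to an arbitrary reduced fraction $u/v$ with the corresponding denominator condition; your matrices for the three cases are essentially the ones the paper writes down (including the same parity check needed to make the $v\equiv 2 \bmod 4$ case work). The one place you diverge is in how the count of orbits is closed off: the paper observes that a fundamental domain for $\rho_j(\Delta_j)$ acting on $\bbH^2_\bbR$ is an ideal quadrilateral with vertices $\infty,0,\tfrac12,1$, so there are exactly three cusp orbits and the three containments must be equalities, whereas you instead prove the reverse containments directly by computing $\gamma\cdot 0$, $\gamma\cdot\tfrac12$, $\gamma\cdot\infty$ for a general $\gamma$ in the Hecke group of level $4$ and reading off the denominator class of the image. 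Your version is slightly more self-contained (it does not require knowing the fundamental domain, only the matrix description from \cref{rmk:ElementsOfDelta} and the standard fact that passing to a finite-index subgroup of $\PSL_2(\bbZ)$ does not change the set of parabolic fixed points), at the cost of one extra computation; either route is complete.
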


\begin{proof}
Let $C$ denote the set of cusp points in $\bbH^2_\bbR$ with respect to the action of $\rho_j(\Delta_j)$. Since $a \in \rho_j(\Delta_j)$, we see that $\infty \in C$. The fundamental domain for the action is an ideal quadrilateral with vertices at $\infty, 0, \frac{1}{2}$ and $1$. There are three distinct orbits corresponding to $0$, $\frac{1}{2}$ and $\infty$. To see the description of these orbits, we first note that the integer matrices
\begin{equation*}
	\begin{pmatrix}
	v' & u \\ -4u' & v 
	\end{pmatrix}, \quad 
	\begin{pmatrix}
	u -2v' & v' \\ v-2u'  & u' 
	\end{pmatrix}, \text{ and} 
	\begin{pmatrix}
	u & v' \\ v & u' 
	\end{pmatrix}
\end{equation*}
map $0$, $\frac{1}{2}$, and $\infty$ to $u/v$ when $v$ is odd, $2 \mod 4$, or $0 \mod 4$, respectively. The determinants of these matrices can be chosen to be 1 because $4u$ is relatively prime to $v$ when both $v$ is odd and $u$ is relatively prime to $v$. By \cref{rmk:ElementsOfDelta}, to be in $\rho_j(\Delta_j)$ (up to sign), it is sufficient to show that the lower diagonal entry of each matrix is congruent to $0 \mod 4$.  This is immediate for the first and the third matrices. For the second matrix, note that $u'$ has to be an odd integer since $v$ is an even integer relatively prime to $u'$. Since $v$ is congruent to $2 \mod 4$, the difference $v-2u'$ is congruent to $0 \mod 4$.
\end{proof}

We conclude this section by describing certain lifts of $N$ in $\bbH^3$ explicitly. The parametrization of these lifts play an important role in the proof of \cref{thm:UniqueThricePuncturedSphere}. We call a totally geodesic hyperplane in $\bbH^3$ \emph{vertical} if it contains $\infty$ in its circle/line at infinity. For convenience, we let $v_j := w_j^{-1} a^{(-1)^{j}}w_j^{-1} a^{-1}$ and note that $v_j y_j v_j^{-1} = a$. Observe that up to the action of $\Stab_{\Gamma_j}(\infty)$, the surface $N$ has three distinct vertical lifts in $\bbH^3$. 

A direct computation shows that 
\begin{equation*}
w_j^{-1}(0) = \infty
\quad \text{and} \quad  v_j(\tfrac{1}{2}) = \infty
\end{equation*}    
Therefore, all vertical lifts of $N$ are orbits of $\bbH^2_\bbR$, $w^{-1}_j(\bbH^2_\bbR)$, and $v_j(\bbH^2_\bbR)$ under the action of $\Stab_{\Gamma_j}(\infty) = \langle a, \ell_j\rangle$. Since the slope at $0$ (resp.~$\frac{1}{2}$) on $\bbH^2_\bbR$ is $-2/1$ (resp.~$1/0$), the vertical hyperplane $w^{-1}_j(\bbH^2_\bbR)$ (resp.~$v_j(\bbH^2_\bbR)$) has slope $-2/1$ (resp.~$1/0$) at infinity,. Furthermore, we have 
\[w_j^{-1}(\infty) = (-1)^{j+1} + \frac{2}{z_j} = -\frac{1}{2}\tau_j + 1 + (-1)^{j+1}
\] 
and 
\[v_j(\infty) = -2 \mu_j^2 + \frac{1}{2} + (-1)^{j+1}  = -\frac{1}{2}\tau_j + \frac{3}{2} +(-1)^{j+1}\]
Thus when $j$ is odd, the line at infinity of $w_j^{-1}(\bbH^2_\bbR)$ goes through the point $1$ and has slope $-2/1$. Similarly, when $j$ is odd, the line at infinity of $v_j(\bbH^2_\bbR)$ goes through the point $-\tau_j/2+5/2$ and has slope $1/0$. \cref{fig:3PVert} depicts the lines at infinity of the vertical lifts of $N$.

\begin{figure}[H]
    \centering
    \includegraphics[height=2.5in]{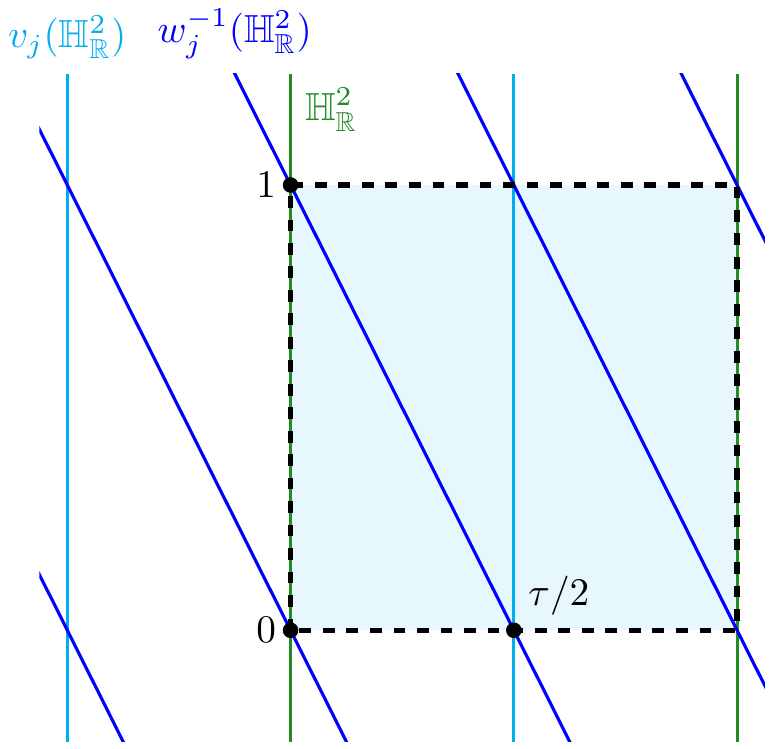}
    \caption{Vertical lifts of $N$ and a fundamental domain of the cusp when $j$ is odd}
    \label{fig:3PVert}
\end{figure}

Note that in \cref{fig:3PVert}, the element $a$ acts by translating upward and $\ell_j$ acts by translating to the right. In other words, the real axis in $\bbC$ is put in the vertical direction. Under this convention, the boundary slopes at infinity of a vertical hyperplane is precisely the slope of its line at infinity. 



\section{Main Theorems}	
\label{sec:MainThms}
For this section, we assume $j \geq 3$ and drop $j$ as the subscript notation.

\subsection{Geometric and arithmetic constraints on totally geodesic surfaces}

The following proposition of Reid \cite[Proposition 2]{Reid} gives an arithmetic constraint on the existence of totally geodesic surfaces inside hyperbolic $3$-manifolds.

\begin{prop}
\label{prop:NoClosedTG}
Let $\Gamma$ be a non-cocompact Kleinian group of finite covolume and satisfying the following two conditions:
\begin{itemize}
	\item $\bbQ(\tr \Gamma)$ is of odd degree over $\bbQ$ and contains no proper real subfield other than $\bbQ$.  
	\item $\Gamma$ has integral traces.
\end{itemize}
Then $\Gamma$ contains no cocompact Fuchsian groups and at most one commensurability class (up to conjugacy in $\PSL_2(\bbC)$) of non-cocompact Fuchsian subgroup of finite covolume.
\end{prop}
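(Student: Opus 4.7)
The plan is to show that any non-elementary Fuchsian subgroup $F$ of $\Gamma$ of finite covolume is commensurable, up to $\PSL_2(\bbC)$-conjugation, with $\PSL_2(\bbZ)$; this yields both conclusions at once, since $\PSL_2(\bbZ)$ is non-cocompact.

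After conjugating $\Gamma$ in $\PSL_2(\bbC)$ I can assume $F \leq \PSL_2(\bbR)$, so $\tr F \subseteq \bbR$. Since conjugation preserves traces, $\bbQ(\tr F) \subseteq \bbQ(\tr \Gamma) \cap \bbR$. A finite-covolume Kleinian group has $k\Gamma$ with exactly one complex place, and the trace embedding of $k\Gamma$ into $\bbC$ is this place, so $\bbQ(\tr \Gamma) \not\subseteq \bbR$. Thus $\bbQ(\tr \Gamma) \cap \bbR$ is a proper real subfield, which by hypothesis equals $\bbQ$. Hence $\bbQ(\tr F) = \bbQ$, $kF = \bbQ$, and integrality of traces in $\Gamma$ restricts to $\tr F \subseteq \bbZ$. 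With totally real invariant trace field $\bbQ$, integer traces, and no non-identity Galois embedding to check, Takeuchi's arithmeticity criterion makes $F$ arithmetic, with invariant quaternion algebra $AF$ a quaternion algebra over $\bbQ$ split at the unique real place since $F \leq \PSL_2(\bbR)$.

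To force $AF \cong M_2(\bbQ)$, I compare with $A\Gamma$. The inclusion $AF \hookrightarrow A\Gamma$ of $\bbQ$-algebras extends by base change to a nonzero $k\Gamma$-algebra map $AF \otimes_\bbQ k\Gamma \to A\Gamma$ between central simple $k\Gamma$-algebras of the same dimension $4$, which must therefore be an isomorphism. Since $\Gamma$ is non-cocompact of finite covolume, $A\Gamma \cong M_2(k\Gamma)$, so $AF$ splits over $k\Gamma$. Here the odd-degree hypothesis is decisive: if $AF$ were ramified at a finite prime $p$, the local degrees $[(k\Gamma)_P : \bbQ_p]$ over places $P \mid p$ would sum to $[k\Gamma : \bbQ]$, which divides the odd integer $[\bbQ(\tr \Gamma) : \bbQ]$; some summand is thus odd, and a quaternion division algebra over $\bbQ_p$ remains ramified under any odd-degree local extension, contradicting splitness. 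So $AF = M_2(\bbQ)$, and then integrality of traces puts $F$ inside the unit group of a maximal order of $M_2(\bbQ)$ up to $\PSL_2(\bbR)$-conjugation. Since all maximal orders of $M_2(\bbQ)$ are conjugate to $M_2(\bbZ)$, this yields commensurability of $F$ with $\PSL_2(\bbZ)$.

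The main obstacle I anticipate is the clean identification $A\Gamma \cong AF \otimes_\bbQ k\Gamma$; this hinges on recognizing that a nonzero $k\Gamma$-algebra map between central simple $k\Gamma$-algebras of the same dimension is forced to be an isomorphism, so one avoids having to exhibit a generating set of $A\Gamma$ from $F$ directly. A secondary subtlety is justifying that the defining embedding $\Gamma \hookrightarrow \PSL_2(\bbC)$ picks out the unique complex place of $k\Gamma$, which is where the intersection $\bbQ(\tr \Gamma) \cap \bbR = \bbQ$ is actually being computed.
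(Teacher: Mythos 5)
This proposition is not proved in the paper at all; it is imported from Reid's article (his Proposition 2), and your argument reconstructs Reid's proof essentially step for step: the real-subfield hypothesis forces $\bbQ(\tr F)=\bbQ$ and hence $\tr F\subseteq\bbZ$ for any Fuchsian subgroup $F$, the isomorphism $A\Gamma\cong AF\otimes_{\bbQ}k\Gamma$ (obtained exactly as you say, from a nonzero map of central simple $k\Gamma$-algebras of dimension $4$) together with non-cocompactness of $\Gamma$ shows that $k\Gamma$ splits $AF$, and the odd-degree hypothesis plus the local parity argument descends this splitting to $\bbQ$, so $AF\cong M_2(\bbQ)$ and $F$ lands, up to conjugacy and commensurability, in $\PSL_2(\bbZ)$. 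The overall argument is correct and all hypotheses are used where they should be.

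One justification does need repair. It is not true that every finite-covolume Kleinian group has invariant trace field with exactly one complex place; that is a feature of the arithmetic case, and for non-arithmetic $\Gamma$ the field $k\Gamma$ may have several complex places. What your argument actually requires is only the weaker fact that $\bbQ(\tr\Gamma)\not\subseteq\bbR$, and the correct reason is the standard lemma that a non-elementary subgroup of $\PSL_2(\bbC)$ all of whose traces are real is conjugate into $\PSL_2(\bbR)$ or into the image of $\mathrm{SU}(2)$; a discrete group of the latter type is finite, and a group of the former type is Fuchsian, so neither can be a Kleinian group of finite covolume. With that substitution, the remainder of your proof (the dimension count for simple algebras, the sum of local degrees being odd, and placing $F$ in a maximal order of $M_2(\bbQ)$, hence in a conjugate of $\PSL_2(\bbZ)$ with finite index) goes through; if you prefer to avoid the distinction between $F$ and $F^{(2)}$ in the last step, you can run the order argument with the non-invariant algebra $\bbQ(\tr F)[F]=\bbQ[F]$ instead of $AF$, since the two coincide up to commensurability of the resulting groups.
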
	

This proposition gives a convenient way to rule out closed totally geodesic surfaces, as remarked in \cite[Lemma 5]{Reid}. 

\begin{cor}
\label{cor:NoClosedTG}
Let $M$ have an odd prime number of half-twists. Then $M$ does not contain closed immersed totally geodesic surfaces.
\end{cor}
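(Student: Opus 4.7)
The plan is to verify the two hypotheses of \cref{prop:NoClosedTG} for $\Gamma_j$ when $j$ is an odd prime, and then conclude that the absence of cocompact Fuchsian subgroups of $\Gamma_j$ precludes any closed immersed totally geodesic surface in $M_j$.

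First I would recall from the discussion after \cref{thm:rho_j}, using Hoste--Shanahan's result that $\Lambda_j(z)$ is the monic minimal polynomial of $z_j$, that the trace field of $\Gamma_j$ is $\bbQ(\tr \Gamma_j) = \bbQ(z_j)$ with $[\bbQ(z_j):\bbQ] = j$. Since $j$ is an odd prime, the degree is odd, so the first half of the odd-degree/real-subfield hypothesis is immediate. For the second half, I would observe that when $[\bbQ(z_j):\bbQ]$ is prime, the only subfields of $\bbQ(z_j)$ are $\bbQ$ and $\bbQ(z_j)$ itself. Because $M_j$ is hyperbolic, $z_j$ is non-real (it is a root of $\Lambda_j$ lying strictly off the real axis, as would be needed for $\rho_j$ to be a discrete faithful representation into $\mathrm{PSL}_2(\bbC)$ rather than into $\mathrm{PSL}_2(\bbR)$), so $\bbQ(z_j)$ is not a real field. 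Hence the only real subfield is $\bbQ$, which is not proper, and the first hypothesis of \cref{prop:NoClosedTG} is verified.

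Next I would verify that $\Gamma_j$ has integral traces. This is exactly the content of \cref{rem:zjPowerBasis}: since $z_j$ is an algebraic integer (a root of the monic integral polynomial $\Lambda_j(z)$), we have $\rho_j(\Gamma_j) \subset \SL_2(\bbZ[z_j])$, and the ring $\bbZ[z_j]$ consists of algebraic integers. In particular every trace $\tr \rho_j(\gamma)$ is an algebraic integer, so the second hypothesis holds.

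With both hypotheses in hand, \cref{prop:NoClosedTG} applies and tells us that $\Gamma_j$ contains no cocompact Fuchsian subgroup. A closed immersed totally geodesic surface in $M_j$ would lift to a totally geodesic hyperplane $\bbH^2 \subset \bbH^3$ whose stabilizer in $\Gamma_j$ is a cocompact Fuchsian group (the fundamental group of the closed surface), and this possibility is now excluded. Therefore $M_j$ contains no closed immersed totally geodesic surface, completing the proof. I do not anticipate any genuine obstacle here; the corollary is essentially an immediate specialization of Reid's proposition once one packages the trace field computation of \cref{thm:rho_j} together with the integrality observation of \cref{rem:zjPowerBasis}, using that prime degree trivializes the real-subfield condition.
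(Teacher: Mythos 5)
Your proposal is correct and follows the same route as the paper: verify the two hypotheses of \cref{prop:NoClosedTG} using $[\bbQ(z_j):\bbQ]=j$ prime (so $\bbQ(z_j)$ has no proper subfield besides $\bbQ$) and the integrality of traces from \cref{rem:zjPowerBasis}, then conclude from the absence of cocompact Fuchsian subgroups. The only difference is that you spell out the standard lifting argument and the non-reality of $\bbQ(z_j)$, which the paper leaves implicit.
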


\begin{proof}
Firstly, since $[\bbQ(\tr \Gamma):\bbQ]=j$ is an odd prime \cite[Theorem 1]{HS}, the field $\bbQ(\tr \Gamma)$ has odd degree over $\BQ$ and contains no proper subfield except $\bbQ$.
Secondly, the fact that $\text{Im}(\rho) \leq \SL_2(\bbZ[z])$ implies that $\Gamma$ has integral traces, see \cref{rem:zjPowerBasis}.
\end{proof}


To prove the main theorems, we need to rule out cusped totally geodesic surfaces that are not freely homotopic to $N$. To this end, we examine general behavior of the intersection of two totally geodesic surfaces, and then we show that any cusped totally geodesic surface in $M$ not freely homotopic to $N$ must intersect $N$ because this thrice-punctured sphere has two distinct boundary slopes.  

We have the following lemma of Fisher, Lafont, Miller and Stover \cite[Lemma 3.1]{FLMS} which describes the intersection of totally geodesic hypersurface and immersed totally geodesic submanifolds in finite-volume hyperbolic $n$-manifold. We restate their lemma for dimension $3$. A geodesic in a cusped finite-volume hyperbolic $3$-manifold $\bbH^3 /\Gamma$ is called a \emph{cusp-to-cusp geodesic} if it is the image of a geodesic in $\bbH^3$ connecting two cusp points under the action of $\Gamma$ on $\bbH^3$. 

%

\begin{lem}
\label{lem:IntersectionsOfTGS}
Let $M$ be a complete finite volume hyperbolic $3$-manifold with at least $1$ cusp. Suppose that $\Sigma_1$ and $\Sigma_2$ are two distinct properly immersed totally geodesic surfaces in $M$ such that $\Sigma_1 \cap \Sigma_2$ is non-empty. Then $\Sigma_1 \cap \Sigma_2$ is the union of closed geodesics and cusp-to-cusp geodesics.
\end{lem}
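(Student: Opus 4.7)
The plan is to lift $\Sigma_1 \cap \Sigma_2$ to the universal cover $\bbH^3$ of $M$, show each connected component is a geodesic, and then use properness of the immersions together with the thick-thin decomposition to classify the ends. I would first fix a component $\gamma$ of $\Sigma_1 \cap \Sigma_2$, a point $p \in \gamma$, and lifts $\tilde p \in \bbH^3$ along with lifts $\tilde{\Sigma}_1, \tilde{\Sigma}_2$ of $\Sigma_1, \Sigma_2$ through $\tilde p$. Since $\Sigma_1 \neq \Sigma_2$ as subsets of $M$, the two totally geodesic hyperplanes $\tilde{\Sigma}_1, \tilde{\Sigma}_2$ in $\bbH^3$ are themselves distinct; two distinct hyperplanes sharing a point of $\bbH^3$ meet in a single hyperbolic geodesic. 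Projecting down, each connected component of $\Sigma_1 \cap \Sigma_2$ is a complete geodesic in $M$.

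Next, because $\Sigma_1$ and $\Sigma_2$ are properly immersed, their images are closed in $M$, and so is $\Sigma_1 \cap \Sigma_2$; hence each component $\gamma$ has closed image in $M$. If the image of $\gamma$ is compact, then $\gamma$ is a closed geodesic and we are done. Otherwise $\gamma$ is parametrized as a proper geodesic $\bbR \to M$ whose image leaves every compact subset of $M$.

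To classify the ends of such a $\gamma$, I would invoke the thick-thin decomposition: by the Margulis lemma, the $\epsilon$-thin part of a cusped finite-volume hyperbolic $3$-manifold is a disjoint union of horoball cusp neighborhoods with compact complement. Properness of $\gamma$ forces $\gamma(t)$ into the thin part for $|t|$ sufficiently large, and by connectedness the end $\gamma|_{[T,\infty)}$ is contained in a single cusp neighborhood. Lifting this ray to $\bbH^3$, it lies entirely inside a single horoball, and in the upper half-space model one checks directly that the only geodesic rays contained in a horoball are the vertical rays limiting at the center of the horoball, which is a parabolic fixed point of the deck group and hence a cusp point. Applying the same argument to the backward end shows that $\gamma$ lifts to a geodesic whose two endpoints at infinity are both cusp points, so $\gamma$ is a cusp-to-cusp geodesic.

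The main obstacle I anticipate is in this last step: a generic geodesic in a cusped hyperbolic $3$-manifold can enter a cusp and exit again, so one must rule out ends that repeatedly bounce between the thick and thin parts. Properness of the intersection, inherited from the proper immersions of the $\Sigma_i$, is precisely what forbids this, and once the geodesic end is trapped in a single horoball the upper half-space analysis finishes the job.
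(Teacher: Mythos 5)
The paper does not prove this lemma; it is quoted directly from Fisher--Lafont--Miller--Stover \cite[Lemma 3.1]{FLMS}, and your argument is essentially the standard proof of that result: transversality of distinct hyperplanes through a common point gives that each component is a complete geodesic, properness of the intersection (inherited from the fiber product of the two proper immersions) makes each non-compact component a proper line, and the horoball analysis in the upper half-space model forces each end to limit on a parabolic fixed point. This is correct. One small imprecision: the $\epsilon$-thin part of a finite-volume cusped hyperbolic $3$-manifold is a disjoint union of cusp neighborhoods \emph{and} Margulis tubes about short closed geodesics, not of cusp neighborhoods alone; what you actually need --- and what is true --- is that the complement of the union of the cusp neighborhoods is compact, so a proper ray is eventually trapped in a single cusp neighborhood. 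With that adjustment the proof goes through as written.
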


\begin{remark}
\label{rem:SelfIntersectionOfTGS}
We observe that the lemma also holds for self-intersection of a totally geodesic surface. The key point is the following. For every 2-plane in the tangent space of a point in $\bbH^3$, there exists a unique hyperplane tangent to the 2-plane. It follows that the self-intersection of totally geodesic surface is transverse. Therefore, the components of the self-intersection are unions of properly immersed complete one-dimensional submanifolds.
\end{remark}

	    
\subsection{Uniqueness of the thrice-punctured sphere}


Before we prove \cref{thm:UniqueThricePuncturedSphere} in this section, we make the following observations as well as restate that we drop the index $j$ from all notations. Let $\Sigma \subset M$ be a cusped totally geodesic surface that is not freely homotopic to $N$. 
Let $\widetilde{\Sigma}$ be a hyperplane lift of $\Sigma$ to $\bbH^3$. Since $\Sigma$ is a cusped surface, the line/circle at infinity of $\widetilde{\Sigma}$ contains a cusp point of $M$. If $\widetilde{\Sigma}$ is not a vertical hyperplane, we replace $\widetilde{\Sigma}$ by a $\Gamma$-translate that moves a cusp point of $\widetilde{\Sigma}$ to infinity. Without loss of generality, we assume that $\widetilde{\Sigma}$ is a vertical hyperplane. 

Since $\Sigma$ is distinct from $N$, we can see from \cref{fig:3PVert} that the line at infinity of $\widetilde{\Sigma}$ must intersect the line at infinity of either $\bbH^2_\bbR$ or $w^{-1}(\bbH^2_\bbR)$ transversely. Therefore, $\widetilde{\Sigma}$ intersects some vertical lift of $N$ along a geodesic $(P,\infty)$ for some $P \in \bbC$. Since $\infty$ is a cusp point of $M$, \cref{lem:IntersectionsOfTGS} implies that $(P,\infty)$ projects onto a cusp-to-cusp geodesic in $M$. In particular, $P$ is a cusp point of $M$.  

Using the geodesic $(P,\infty)$, we construct parabolic elements in $\Stab_\Gamma(\widetilde{\Sigma})$ fixing each end point and analyze the trace of the product of these elements. Since $P$ is a cusp point of $M$, there exists $\gamma \in \Gamma$ such that $\gamma(\infty) = P$. Since $P$ and $\infty$ are cusp points of $\widetilde{\Sigma}$, $\Stab_\Gamma(\widetilde{\Sigma})$ contains the nontrivial elements $a^p \ell^q$ and $\gamma a^m\ell^n \gamma^{-1}$ fixing $\infty$ and $P$, respectively, where $\gcd(p,q) = \gcd(m,n)=1$. In other words, $p/q$ (resp.~$m/n$) is the boundary slope of $\widetilde{\Sigma}$ at $\infty$ (resp.~$P$).  


The following is a critical preliminary calculation in our analysis. We note that $\tr(\rho(\pi_1\Sigma))$ must contain only real algebraic integers in $\bbQ(z)$. The condition that $\bbQ(z)$ does not contain any proper real subfield other than $\bbQ$ implies that $\tr(\rho(\pi_1 \Sigma)) \subseteq \bbZ$. Suppose that 
\begin{equation*}
\rho(\gamma) = \begin{pmatrix}
\alpha & \beta \\ \delta & \eta
\end{pmatrix}
\end{equation*}
Then, dropping the $\rho$ for convenience of notation, we must have 
\begin{equation*}
\nonumber
\tr(\gamma a^m \ell^n \gamma^{-1} a^p \ell^q) 
= (-1)^{n+q+1}[-2 + (m+n \tau)(p+q \tau) \delta^2 ] \in \bbZ 
\end{equation*}
which is true if and only if
\begin{equation}
\label{eq:TraceCondition}
\delta^2(nq\tau^2 + (mq + np)\tau + mp ) \in \bbZ
\end{equation}
We shall refer to this as the trace condition, which gives us the following:

\begin{lem}
\label{lem:SlopesOfTGS}
Let $M$ be the complement of a twist knot whose trace field is of odd degree over $\BQ$ and contains no proper real subfield other than $\BQ$.  Let $N$ be the totally geodesic thrice-punctured sphere as in \cref{fig:twistknot}. Suppose that $\Sigma$ is an immersed cusped totally geodesic surface that is not freely homotopic $N$. Then the set of boundary slopes of $\Sigma$ is precisely $\{1/0,-2\}$, up to multiplicity.
\end{lem}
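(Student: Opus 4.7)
The plan is to rewrite the trace condition in a form amenable to direct analysis and then carry out a case analysis based on which vertical lift of $N$ is intersected by $\widetilde{\Sigma}$.

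First, using $\tau = 2 + 4\mu^2$ (from \cref{eq:zj_muj}), the trace condition becomes
\[
\delta^2\bigl((m+2n)(p+2q) + 4(mq+np+4nq)\mu^2 + 16nq\,\mu^4\bigr) \in \bbZ.
\]
Since $\delta \in \bbZ[z]$ and $\mu^2 \in \bbZ[z]$, the left-hand side lies in $\bbZ[z]$, and combined with the preceding paragraph's reality considerations the hypothesis $\bbQ(z)\cap\bbR = \bbQ$ forces it into $\bbZ[z]\cap\bbQ = \bbZ$.

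Next, up to $\langle a,\ell\rangle$-action there are three cases for $N_0$, namely $\bbH^2_\bbR$, $w^{-1}\bbH^2_\bbR$, and $v\bbH^2_\bbR$. For each, the cusp point $P$ (or its image under $w$ or $v^{-1}$) is a rational cusp point lying in one of the three $\Delta$-orbits $[\infty], [0], [\tfrac{1}{2}]$ of \cref{lem:CuspsOrbits}. In each of the nine resulting subcases, I would compute $\delta = \rho(\gamma)_{21}$ explicitly, using \cref{cor:Matrixwj} together with the matrix representatives from the proof of \cref{lem:CuspsOrbits}. The upshot of these matrix computations is that $\delta^2$ always takes one of three shapes: $\delta^2 \in \bbZ$, $\delta^2 \in \bbZ\cdot z$, or $\delta^2 \in \bbZ\cdot z^2$. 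As an illustrative example, $\gamma = w$ sends $\infty \to 0$ and yields $\delta = z\mu$, so $\delta^2 = -z$.

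Substituting each shape into the trace condition and expressing $1/z, 1/z^2$ in the $z$-power basis via $\Lambda_j(0) = 1$, I would equate all nonconstant coefficients to zero. The $\delta^2 \in \bbZ$ case forces $(m,n) = (p,q) = (\pm 1, 0)$, making $\widetilde{\Sigma}$ horizontal and hence parallel to $N_0$ in the cases where this $\delta^2$-shape actually arises (namely when $N_0$ is a translate of $\bbH^2_\bbR$ or $v\bbH^2_\bbR$), contradicting transverse intersection. The $\delta^2 \in \bbZ\cdot z^2$ case forces both slopes to equal $-2/1$, making $\widetilde{\Sigma}$ parallel to $N_0$ in its only possible case ($N_0$ a translate of $w^{-1}\bbH^2_\bbR$). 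The remaining $\delta^2 \in \bbZ\cdot z$ case yields $(m+2n)(p+2q) = 0$ and $nq = 0$, which with $\gcd(m,n) = \gcd(p,q) = 1$ give $\{(m,n),(p,q)\} = \{(\pm 1, 0), (\pm 2, \mp 1)\}$, so the two boundary slopes of $\widetilde{\Sigma}$ are exactly $1/0$ and $-2/1$. Applying this argument to every cusp of $\Sigma$ (after lifting that cusp to $\infty$) yields the lemma, with both slopes realized since the two cusps of $\widetilde{\Sigma}$ at $\infty$ and $P$ always receive complementary slopes in $\{1/0, -2/1\}$.

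The main obstacle is verifying uniformly the trichotomy $\delta^2 \in \{\bbZ, \bbZ\cdot z, \bbZ\cdot z^2\}$ across all nine subcases, which amounts to a finite list of $2\times 2$ matrix multiplications involving $\rho(w)$, $\rho(v)$, and $\Delta$-representatives, using only $z\mu^2 = -1$; this is routine but requires careful bookkeeping.
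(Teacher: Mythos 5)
Your proposal is correct and follows essentially the same route as the paper's proof: a vertical lift of $\Sigma$ meeting a vertical lift of $N$ transversely along a cusp-to-cusp geodesic, explicit representatives from \cref{lem:CuspsOrbits} (composed with $w$ or $v^{-1}$) to compute $\delta$, and the degree-$\geq 3$ hypothesis on $\bbQ(z)$ applied to the trace condition; your trichotomy $\delta^2 \in \bbZ$, $\bbZ z$, $\bbZ z^2$ and the resulting slope conclusions agree with the paper's computations in every subcase. The only differences are organizational: you sort subcases by the shape of $\delta^2$ and include the redundant lift $v(\bbH^2_\bbR)$, whereas the paper observes that any vertical $\widetilde{\Sigma}$ already meets $\bbH^2_\bbR$ or $w^{-1}(\bbH^2_\bbR)$ transversely and treats only those two cases.
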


\begin{proof}
It follows from the previous discussion that $\Sigma$ has a vertical lift $\widetilde{\Sigma}$ to $\bbH^3$. The lift $\widetilde{\Sigma}$ must intersect either $\bbH^2_\bbR$ or $w^{-1}(\bbH^2_\bbR)$, whose respective slopes are $1/0$ and $-2/1$, along some vertical geodesic $\theta$. One end point of $\theta$ is $\infty$, so by \cref{lem:IntersectionsOfTGS}, $\theta$ covers a cusp-to-cusp geodesic. This implies that the other end point of $\theta$ must also be a cusp point in $\bbH^2_\bbR$ or $w^{-1}_j(\bbH^2_\bbR)$. Therefore, either $\theta = (s/t, \infty)$ for $s/t \in \bbQ$ or $\theta = (w^{-1}(s/t), \infty)$ for $s/t \in \bbQ \cup\{\infty\}$. 

Suppose that the boundary slopes on $\widetilde{\Sigma}$ are $p/q$ at $\infty$ and $m/n$ at the other end point of $\theta$. As boundary slopes, neither $(m,n)$ nor $(p,q)$ is $(0,0)$. We have nontrivial elements $a^p\ell^q$ and $\gamma a^m\ell^n\gamma^{-1}$ in $\Stab_{\Gamma}(\widetilde{\Sigma})$ where $\gamma \in \Gamma$ has the property that $\gamma^{-1}(s/t) = \infty$ or $\gamma^{-1}w^{-1}(s/t) = \infty$. We consider these two cases separately, and use the general form
    \begin{equation*}
    \gamma = \begin{pmatrix}
    \alpha & \beta \\ \delta & \eta 
    \end{pmatrix}
    \end{equation*}
Note that the end points of $\theta$ are distinct since $\theta$ is a geodesic in $\bbH^3$. Consequently, $\gamma$ cannot fix $\infty$, and so $\delta \neq 0$.

\begin{description}
\item[Case 1:] Suppose that $\widetilde{\Sigma}$ intersects $\bbH_\bbR^2$ nontrivially along a geodesic $\theta = (s/t,\infty)$ for some $s/t \in \bbQ$, as seen in \cref{fig:3PVert_Sigma}. In view of \cref{lem:CuspsOrbits}, we may choose $\gamma$ to be either
\begin{equation*}
	\begin{pmatrix}
	t' & s \\ -4s' & t 
	\end{pmatrix} w, \quad 
	\begin{pmatrix}
	s - 2t' & t' \\ t - 2s' & s' 
	\end{pmatrix} 	v^{-1}, \text{ or } \begin{pmatrix}
	s & t' \\ t & s'
	\end{pmatrix}
\end{equation*}
according to whether $s/t$ belongs to $[0]$, $[\frac{1}{2}]$ or $[\infty]$, respectively. It suffices to compute $\delta$ in these three circumstances by using the trace condition \cref{eq:TraceCondition}. 

Since $v^{-1}(\infty) = \frac{1}{2}$, we see that $v^{-1}$ is of the form
\begin{equation*}
\pm \begin{pmatrix} 1 & * \\ 2 & * \end{pmatrix}    
\end{equation*}
This implies that $\delta = t \in \bbZ \smallsetminus \{0\}$ in the last two situations. We show that this contradicts the assumption that $(p,q) \neq (0,0) \neq (m,n)$. It follows from \eqref{eq:tau} that the trace field is the same as the cusp field. That is, $\bbQ(z) = \bbQ(\tau)$. Then 
\[ [\bbQ(\tau) :\bbQ] = [\bbQ(z) :\bbQ] \geq 3,\] 
so the minimal polynomial of $\tau$ over $\BQ$ has degree $\geq 3$. In particular, $\tau$ cannot satisfy the quadratic polynomial over $\bbZ$ given by the trace condition. Therefore, we have $nq = 0$ and $mq + np =0$ since $\delta^2 \neq 0$. Since $\widetilde{\Sigma}$ intersects $\bbH^2_\bbR$ nontrivially, the boundary slope at $\infty$ on $\widetilde{\Sigma}$ is not meridional; in other words, $q \neq 0$. We thus have $n=0$, which implies that $mq = 0$, and so $m=0$. This contradicts the assumption that $(m,n)\neq (0,0)$.  

In the first situation, we get $\delta = t \mu z$ and so $\delta^2 = -t^2 z$. The trace condition becomes
\begin{align*}
	\delta^2(nq\tau^2 + (mq + np)\tau + mp ) 
	& \in \bbZ
	\\
	z [nq(-4z^{-1}+2)^2 +(mq + np)(-4z^{-1}+2) + mp ] 
	& \in \bbQ
	\\
	z [16nqz^{-2}  -(16nq +4mq +4np )z^{-1}+  4nq +2mq +2 np + mp ]
	& \in \bbQ
	\\
	16nqz^{-1} + ( 2n +m)(2q+p)z 
	& \in \bbQ 
	\\
	16nq + ( 2n +m)(2q+p)z^2 
	& \in \bbQ z
\end{align*}
Since the minimal polynomial of $z$ has degree at least $3$, satisfying the above is equivalent to satisfying
\begin{equation*}
16 nq 
= (2n+m)(2q+p)
= 0
\end{equation*}
Again, since $\widetilde{\Sigma}$ intersects $\bbH^2_\bbR$ nontrivially, we have $q \neq 0$, so $m/n=1/0$ and $p/q = -2/1$.

\begin{figure}[H]
    \centering
    \includegraphics[height=2.5in]{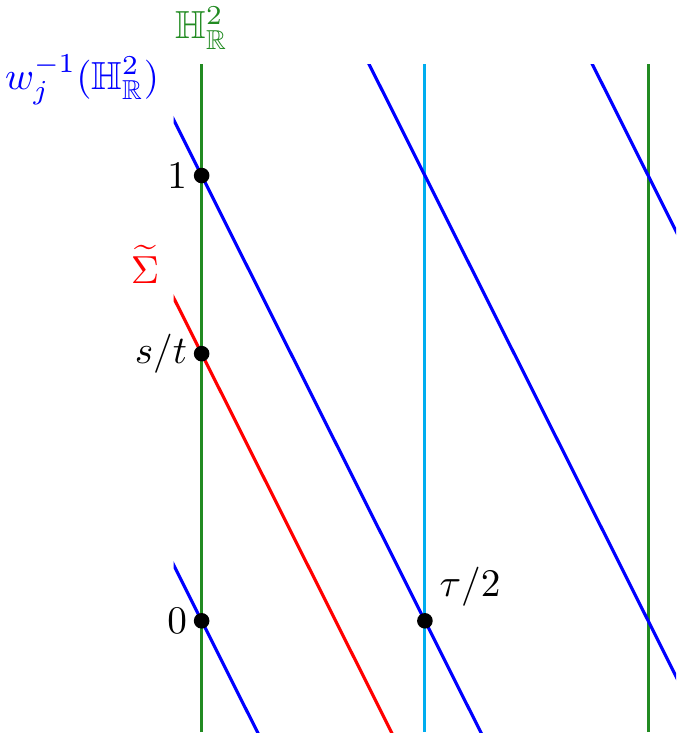}
    \caption{Vertical lifts of $N$ and $\Sigma$ when $j$ odd}
    \label{fig:3PVert_Sigma}
\end{figure} 

\item[Case 2:] Suppose that $\widetilde{\Sigma}$ is parallel to $\bbH^2_\bbR$, or equivalently $\theta = (w^{-1}(s/t),\infty)$ for some $s/t \in \bbQ$. We may assume that $p/q = 1/0$. In view of \cref{lem:CuspsOrbits}, we may choose $\gamma = \begin{pmatrix}
\alpha & \beta \\ \delta & \eta 
\end{pmatrix}$ to be either 
\begin{equation*}
	w^{-1}\begin{pmatrix}
	-t' & s \\ -4s' & t 
	\end{pmatrix} w, \text{ or }
	w^{-1}\begin{pmatrix}
	s - 2t' & t' \\ t - 2s' & s' 
	\end{pmatrix} 	v^{-1}, \text{ or } w^{-1}\begin{pmatrix}
	s & t' \\ t & s'
	\end{pmatrix}
\end{equation*}
When $s/t \in [0]$, we get $\delta = sz$, so
\begin{align*}
	\delta^2 (m+n \tau) 
	& \in \bbZ
	\\
	s^2z^2 (m+n(-4z^{-1}+2)) 
	& \in \bbZ
\end{align*}
This is satisfied if and only if $s^2n = s^2(2n+m) = 0$ because the minimal polynomial of $z$ is at least cubic over $\bbQ$. If $s = 0$, then $w^{-1}(0) = \infty$ since endpoints of $\theta$ must be distinct, therefore $s \neq 0$.  When $s \neq 0$, we must have $m=n=0$, which is a contradiction.

When $s/t \in [\frac{1}{2}]$ or $[\infty]$, we see that $\delta = -s\mu z$ and use an equivalent calculation to Case 1 to conclude that the boundary slopes are $p/q = 1/0$ and $m/n = -2/1$.
\end{description}
\vspace{-2em}
\end{proof}



We now prove the uniqueness of $N$. 

\begin{proof}[Proof of \cref{thm:UniqueThricePuncturedSphere}]
Throughout the proof, we assume that the trace field of $\Gamma$ has odd degree and has no proper real subfield except for $\bbQ$. In particular, $\Gamma = \Gamma_j$ for some positive odd integer $j$. Since $\Gamma$ has integral traces by \cref{cor:NoClosedTG} and its trace field satisfies the condition in \cref{prop:NoClosedTG}, any totally geodesic surface in $M$ must have at least one cusp.

\cref{lem:SlopesOfTGS} has the following consequence. If $\Sigma$ is a cusped totally geodesic surface in $M$ that is not $N$, then there exists a vertical lift $\widetilde{\Sigma}$ in $\bbH^3$ with slope $-2/1$ at $\infty$. This lift intersects $\bbH^2_\bbR$ along a cusp-to-cusp geodesic $(s/t, \infty)$. A careful look at the first case in the proof of \cref{lem:SlopesOfTGS} shows that $s/t$ must belong to the orbit of $[0]$ under the action of $\Delta$ on $\bbH^2_\bbR$; in particular, $t$ is an odd integer. Using the action of $a$, we may assume that $0 < s/t < 1$, since $\Sigma$ is distinct from $N$. Let $H_{s/t}$ be the vertical lift of $\Sigma$ to $\bbH^3$ of slope $-2/1$ at $\infty$ that intersects the hyperplane $\bbH^2_\bbR$ along the geodesic $(s/t,\infty)$. 

In order to prove \cref{thm:UniqueThricePuncturedSphere}, it suffices to show that each vertical hyperplane $H_{s/t}$ for $0<s/t<1$ with $t$ odd covers an infinite area totally geodesic surface, which implies that $H_{s/t}/\Stab_{\Gamma}(H_{s/t})$ must be dense in $M$. With this density in mind, we construct a non-closed non-cusp-to-cusp geodesic in $M$ as a self-intersection of $H_{s/t}/\Stab_{\Gamma}(H_{s/t})$.

Fix a rational number $0<s/t<1$ where $t$ odd and $s$, $t$ relatively prime. Suppose that $H_{s/t}$ covers a complete properly immersed finite area totally geodesic surface. We move $H_{s/t}$ by an element of $\Gamma$ that takes $s/t$ to $\infty$. To write down such an element, we first map $s/t$ to $0$ by an element of $\Delta$, then map $0$ to $\infty$ by $w^{-1}$. Consider the following element 
\begin{equation*}
	\zeta = w^{-1}\begin{pmatrix}
	t & -s \\ 4s' & -t'
	\end{pmatrix}
\end{equation*}
taking $s/t$ to $\infty$. Note that the latter matrix in the product belongs to $\Delta$ as discussed in \cref{rmk:ElementsOfDelta}. Therefore, $\zeta \in \Gamma$. Since $\zeta(s/t) =\infty$, the hyperplane $\zeta(H_{s/t})$ must be vertical. 

The analysis in the first case of \cref{lem:SlopesOfTGS} shows that the slope of $H_{s/t}$ at $s/t$ is $1/0$.  Consequently, the slope of $\zeta(H_{s/t})$ is at $\infty$ is $1/0$. Since $H_{s/t}$ has slope $-2/1$, the vertical hyperplane $\zeta(H_{s/t})$ intersects $H_{s/t}$ along a geodesic $\theta = (\theta_-,\infty)$ for some $\theta_-\in \bbC$. We have 
\[\zeta(\infty) =1 +  \left(\frac{1}{2}+\frac{s'}{t}\right)\frac{4}{z}\]
Since the slope of $H_{s/t}$ at $\infty$ is $-2/1$, the line at infinity of $H_{s/t}$ is parametrized by 
\[\frac{s}{t} + r(\tau - 2) = \frac{s}{t} - r\frac{4}{z}\] for some $r\in\bbR$ while that of $\zeta(H_{s/t})$ is parametrized by 
\[\zeta(\infty) + r'\] 
for some $r'\in\bbR$. It follows that 
\begin{equation}
\label{eq:theta-}
\theta_- = \frac{s}{t} +   \left(\frac{1}{2}+\frac{s'}{t}\right)\frac{4}{z} = \frac{ s z + 4s' + 2t }{t z}. 
\end{equation}
Since one end point of $\theta$ is at $\infty$, a cusp point of $\Gamma$, \cref{rem:SelfIntersectionOfTGS} implies that $\theta$ covers a cusp-to-cusp geodesic. In particular, $\theta_-$ is a cusp point of $\Gamma$. Thus, there exists 
\[\gamma = \begin{pmatrix}
\alpha & \beta \\ \delta & \eta 
\end{pmatrix} \in \Gamma \leq \SL_2(\bbZ[z])\] 
such that $\gamma(\theta_-) = \infty$, see \cref{rem:zjPowerBasis}. We must have 
\begin{equation}
\label{eq:eta}
	\eta = -\delta \theta_-
\end{equation}
By \cref{lem:SlopesOfTGS}, the slope at $\theta_-$ of $H_{s/t}$ is either $1/0$ or $-2/1$. We consider these two cases separately.
\begin{description}
\item[Case 1:] 
Suppose that the slope at $\theta_-$ is $1/0$. Consequently, $\Stab_\Gamma(H_{s/t})$ contains $\gamma^{-1}a\gamma$. The trace condition \cref{eq:TraceCondition} implies that
\begin{equation*}
	\tr(\gamma^{-1} a\gamma
	a^2\ell^{-1}) = \delta^2(2-\tau) = -4\mu^2\delta^2 \in \bbZ.
\end{equation*}
Let us write $\delta^2 = z \, k/4$ for some $k \in \bbZ$. Note that $\delta \neq 0$ as $\Gamma$ contains no trace $0$ element. Since $\delta\in\bbZ[z]$, we have $k/4 \in \bbZ$. Using \cref{eq:zj_muj}, we have $\delta = \pm \mu^{-1} \sqrt{-k/4}$ or $\mu\delta = \pm \sqrt{-k/4}$. Since $[\bbQ(z):\bbQ]$ is odd by \cite[Theorem 1]{HS}, $\bbQ(z)$ does not contain the square root of non-square integer. It follows that $\sqrt{-k/4} \in \bbZ$. In other words, $\delta = \pm  m \mu^{-1}$ for some $m \in \bbZ$. Substituting this and \cref{eq:theta-} into \cref{eq:eta}, we have 
\begin{equation*}
\pm\eta\mu^{-1} = -m(-z)\frac{sz + 4s' + 2t}{tz}  = \frac{ms}{t}z + \frac{m(4s' + 2t)}{t} \in \bbZ[z].
\end{equation*}
The integer $t$ must divide $m$ because $s$ and $t$ are relatively prime.
Since $\mu$ is a unit in $\bbZ[z]$, we have the following containment of ideals $\eta \bbZ[z] \subseteq \frac{m}{t} \bbZ[z]$ and $\delta\bbZ[z] \subseteq m\bbZ[z] \subseteq \frac{m}{t}\bbZ[z]$. In particular, the ideal $(\delta,\eta)\bbZ[z]$ generated by $\delta$ and $\eta$ in $\bbZ[z]$ is contained in $\frac{m}{t}\bbZ[z]$. Since $\alpha\eta - \beta\delta = 1$, $(\delta,\eta)\bbZ[z] = \bbZ[z]$. This fact implies that $m = \pm t$. We obtain the final expressions for $\eta$ and $\delta$.  
\begin{equation}
\label{eq:eta_delta_case1}
\eta =\pm(sz + 4s' + 2t) \mu \quad \text{and} \quad \delta = \pm t\mu^{-1}
\end{equation}
The slope of $H_{s/t}$ at $\theta_-$ is $1/0$ and $\gamma(\theta_-) = \infty$, so $\gamma(H_{s/t})$ is a vertical hyperplane with $1/0$ slope at $\infty$. Thus, this hyperplane intersects $w^{-1}(\bbH^2_\bbR)$ along a vertical geodesic $\sigma$. By \cref{lem:IntersectionsOfTGS}, $\sigma$ covers a cusp-to-cusp geodesic in $M$, since one end point of $\sigma$ is at $\infty$. Therefore, the other end point of $\sigma$ must be a cusp point of $\Gamma$. 

The boundary of $w^{-1}(\bbH^2_\bbR)$ is parametrized by $1 + r'(2-\tau)$ for $r'\in\bbR$, and the boundary of $\gamma(H_{s/t})$ is parametrized by $\gamma(\infty) + r$ for some $r\in \bbR$. Since cusp points of $\Gamma$ are contained in $\bbQ(z)\cup \{\infty\}$, there exist $r,r' \in \bbR$ such that
\begin{equation*}
\pm \frac{\alpha}{t}\mu + r = 1 +  r'(2-\tau) = 1+ \frac{4r'}{z}\in \bbQ(z)
\end{equation*} 
Therefore, $r,r'\in \bbQ(z) \cap \bbR =\bbQ$ because $\bbQ(z)$ has no proper real subfield except $\bbQ$. Using \cref{eq:zj_muj}, we see that
\begin{equation*}
\pm\alpha/\mu =(4r't +(1-r) tz)
\end{equation*}
Since $\alpha\in\bbZ[z]$ and $\mu$ is a unit of $\bbZ[z]$, the right-hand side of the equation must belong to $\bbZ[z]$. In particular, $4r't$ and $(1-r)t$ are integers. To simplify our notation, we write $\alpha = (\alpha_1 z + \alpha_0)\mu$ where $\alpha_i \in \bbZ$. Now we substitute the expression for $\alpha$, $\eta$, and $\delta$ (see \cref{eq:eta_delta_case1}) into the determinant of $\gamma$ to obtain 
\begin{equation*}
\begin{aligned}
(\alpha_1z + \alpha_0)(sz + 4s'+2t)\mu^2 \pm t \mu^{-1} \beta &= 1\\
(\alpha_1 z + \alpha_0)(sz + 4s'+2t) \pm t \mu \beta &= -z
\end{aligned}    
\end{equation*}
Since $\mu,\beta\in\bbZ[z]$, we can write the equation above as
\begin{equation*}
\alpha_0(4s' + 2t)  + [\alpha_1(4s'+2t) + \alpha_0 s ]z + \alpha_1s z^2 + t\sum_{i=0}^{j-1} k_i z^i = -z
\end{equation*}
for $k_i \in \bbZ$. Noting that $[\bbQ(z):\bbQ] \geq 3$, the equality above implies that 
\begin{equation*}
\begin{aligned}
\alpha_0(4s' + 2t) + tk_0 &= 0, \\
\alpha_1(4s'+ 2t) + \alpha_0s + tk_1 &= -1, \\
(\alpha_1 s + tk_2) &= 0.
\end{aligned}
\end{equation*}
Both $s$ and $s'$ are units in $\bbZ/t\bbZ$, so the first and the third equations imply that 
\[\alpha_i \equiv 0 \mod t.\] 
This contradicts the second equation. Therefore, there exists no such $\gamma \in \Gamma$ in this case. 

\item[Case 2:]
Suppose that the slope at $\theta_-$ is $-2/1$. It follows that $\Stab_\Gamma(H_{s/t})$ contains $\gamma^{-1} a^2\ell_j^{-1}\gamma$. The trace condition \cref{eq:TraceCondition}, implies that
\begin{equation*}
	\tr(\gamma^{-1} a^2\ell^{-1}\gamma
	a^2\ell^{-1}) =\delta^2(2-\tau)^2 = \frac{16\delta^2}{z^2}\in \bbZ. 
\end{equation*}
Let us write $\delta^2 = z \, k/16$ for some $k\in\bbZ$. Since $\delta \in \bbZ[z]$, we must have $k/16 \in \bbZ$. Now we write $\delta = \pm z \sqrt{k/16}$. Note that $\delta \neq 0$ since $\Gamma$ does not contain element with trace $0$. Since $[\bbQ(z):\bbQ]$ is odd by \cite[Theorem 1]{HS}, the trace field $\bbQ(z)$ does not contain the square root of any non-square integer. It follows that $\sqrt{k/16} \in \bbZ$. In other words, $\delta = m z$ for some $m \in \bbZ$. Substituting this into \cref{eq:eta}, we have 
\begin{equation*}
	\eta = -mz\frac{s z + 4s' + 2t}{tz}
	=-\frac{ms}{t}z - \frac{m(4s'+2t)}{t} \in \bbZ[z].
\end{equation*}
Since $\eta \in \bbZ$, we see that $ms/t \in \bbZ$. Since $s$ and $t$ are relatively prime, $t$ divides $m$. We have the following containment of ideals $\eta \bbZ[z] \subseteq \frac{m}{t}\bbZ[z]$ and $\delta \bbZ[z] =  m \bbZ[z] \subseteq \frac{m}{t}\bbZ[z]$. In particular, $(\delta,\eta)\bbZ[z] \subseteq \frac{m}{t}\bbZ[z]$. Since $\alpha \eta - \beta \delta = 1$, $(\delta,\eta)\bbZ[z] = \bbZ[z]$. This implies that $m = \pm t$. We obtain the following expressions for $\eta$ and $\delta$

\begin{equation}
\label{eq:eta_delta_case2}
\eta = \pm(sz +4s' + 2t)\quad \text{and}\quad \delta = \pm t z.
\end{equation}

Since the slope of $H_{s/t}$ at $\theta_-$ is $-2/1$ and $\gamma(\theta_-) = \infty$, we observe that $\gamma(H_{s/t})$ is a vertical plane with slope $-2/1$ at $\infty$. It must intersect the hyperplane $\bbH^2_\bbR$ at a vertical geodesic $\sigma$. By \cref{lem:IntersectionsOfTGS}, $\sigma$ covers a cusp-to-cusp geodesic in $M$, since one end point of $\sigma$ is $\infty$. The other end point of $\sigma$ is a cusp point on the line at infinity of $\bbH^2_\bbR$ and so must be a rational number. In other words, there exists a real number $r$ such that 
\begin{equation*}
	\frac{\alpha}{tz} + r\frac{4}{z} \in \bbQ.
\end{equation*}
This implies that $r \in \bbQ(z) \cap \bbR = \bbQ$ since $\bbQ(z)$ has no proper real subfield except $\bbQ$. Therefore $\alpha = \alpha_1 z + \alpha_0 \in \bbZ[z]$. Now we substitute the expression for $\alpha$, $\eta$ and $\delta$ (see \cref{eq:eta_delta_case2}) into the determinant of $\gamma$ to obtain 
\begin{equation*}
	(\alpha_1 z + \alpha_0)(sz + 4s' + 2t) - t z \beta  = \pm 1 \mod t
\end{equation*}  
Since $\mu,\beta\in\bbZ[z]$, we can write the equation above as
\begin{equation*}
\alpha_0(4s' + 2t) + [\alpha_1(4s' + 2t)+\alpha_0 s] z + \alpha_1 s z^2 + t\sum_{i=0}^{j-1} k_i z^i = \pm 1     
\end{equation*}
Since $[\bbQ(z):\bbQ] \geq 3$, the equality above implies that 
\begin{equation*}
\begin{aligned}
\alpha_0(4s' + 2t) + tk_0 &= \pm 1 \\
\alpha_1(4s' + 2t) + \alpha_0 s + tk_1 &= 0 \\
\alpha_1 s + t k_2 &= 0  
\end{aligned}
\end{equation*}
Since $s$ is a unit in $\bbZ/t\bbZ$, the third equation implies that $\alpha_1 \equiv 0 \mod t$. Given that $\alpha_1 \equiv 0 \mod t$ and $s \in (\bbZ/t\bbZ)^\times$, the second equation implies that $\alpha_0 \equiv 0 \mod t$. These conditions on $\alpha_i$ contradict the first equation. Consequently, there exists no such $\gamma$ in this case. 
\end{description}
The analysis of both cases shows that $\theta_-$ is not a cusp point of $\Gamma$ because there is no $\gamma\in \Gamma$ such that $\gamma(\infty) = \theta_-$. This contradiction implies that $H_{s/t}$ does not cover a finite area totally geodesic surface. This completes the proof of \cref{thm:UniqueThricePuncturedSphere}.
\end{proof}

The existence of infinitely many twist knot complements with a unique totally geodesic surface follows as a consequence.

\begin{proof}[Proof of \cref{cor:ExistUnique}]
Let $M$ have an odd prime number of twists.  Note that $\tr(\rho(\pi_1\Sigma)) \subset \bbZ$ because $\Gamma$ has integral traces, $\bbQ(\tr \Gamma)$ has no proper real subfield (except $\bbQ$) because it has odd prime degree over $\BQ$ by \cite[Theorem 1]{HS}, and $\rho(\pi_1\Sigma)$ is a Fuchsian subgroup.  Thus the conditions of \cref{thm:UniqueThricePuncturedSphere} are satisfied.

Using Magma \cite{Magma} and SageMath \cite{sage}, we verify that $\Gamma_j$ also satisfies the conditions of \cref{thm:UniqueThricePuncturedSphere} for odd integers with $3\leq j \leq 99$, see \cite{Codes} for the code.

Twist knot complements with distinct numbers of half-twists are always non-commensurable because their invariant trace fields $\bbQ(z_j)$ are pairwise distinct. Thus we have a family of infinitely many non-commensurable $3$-manifolds with exactly one totally geodesic surface.
\end{proof}

\subsection{Lifting the thrice-punctured sphere}

We at last deduce the proof of \cref{thm:kTGSurfaces}, reinstating the index j.

\begin{proof}[Proof of \cref{thm:kTGSurfaces}]
In view of \cref{cor:ExistUnique}, to obtain a finite volume hyperbolic $3$-manifold with exactly $k$ totally geodesic surfaces, it suffices to find covers of $M_j$ such that $N$ has exactly $k$ lifts for some odd prime $j$. We construct these manifolds from dihedral covers of the twist knot complements. We first count the number of lifts of $N$ to dihedral covers of twist knot with an odd number of half-twists. For this purpose, let $j$ be a positive odd integer. 

We write down an explicit surjective homomorphism from $\Gamma_j$ to the dihedral group $D_n$ of order $2n$. The group $D_{n}$ has the following presentation 
\begin{equation*}
    D_n = \langle \alpha,\beta \mid \alpha^2, \beta^2, (\alpha\beta)^n \rangle.
\end{equation*}
Let us fix an integer $n > 1$ dividing $2j+1$. We have a surjective homomorphism $\varphi:\Gamma_j \to D_n$ defined by $\varphi(a) =\alpha$ and $\varphi(b) =\beta$ since $\varphi(w_jaw_j^{-1}b^{-1}) = \varphi(ab)^{2j+1}$. Let $\hat{M}_j$ be the cover corresponding to $\ker \varphi$. 

We observe that 
\begin{equation*}
    \varphi(y_j) = \varphi((aw_j)^2a(aw_j)^{-2}) =\alpha   
\end{equation*}
since $\varphi(aw_j)$ has order 2. Therefore, $\varphi(\Delta_j) = \langle \alpha \rangle$ is a cyclic subgroup of order 2. Let $N_0$ be a lift of $N$ to $\hat{M}_j$ corresponding to $\ker \varphi|_{\Delta_j}$. Since $\hat{M}_j$ is a regular cover, the deck group $D_n$ acts transitively on the lifts of $N$ to $\hat{M}_j$. We identify these lifts with the orbit of $N_0$ under the action of $D_n$. Since the stabilizer of $N_0$ under the action of $D_n$ is $\langle \alpha \rangle$, we identify the lifts of $N$ to $\hat{M}_j$ with the left cosets of $\langle \alpha \rangle$ in $D_n$. In particular, $N$ lifts to $n$ distinct totally geodesic surfaces $\left\{ N_{m} \right\}_{m=0}^{n-1}$ in $\hat{M}_j$. 

Observe that the cyclic subgroup $\langle \alpha \beta \rangle$ of order $n$ forms a complete left coset representatives of $\langle\alpha\rangle$ in $D_n$. Up to reordering, we identify $N_m$ with the coset $(\alpha\beta)^m\langle\alpha\rangle$ for $0 \leq m \leq n-1$. The action of $\alpha$ on the lifts $N$ is given by 
\[
\alpha \cdot N_m = N_{n - m},
\]
where $N_n := N_0$, since $\alpha \cdot (\alpha\beta)^m \langle \alpha \rangle =   (\alpha\beta)^{n - m} \langle \alpha \rangle$. Since $n$ is an odd integer, $\alpha$ leaves invariant exactly one surface $N_0$ and permutes $n-1$ surfaces pairwise. It follows that $\hat{M}_j/\langle \alpha \rangle$ contains exactly $1 + \frac{n-1}{2} = (n+1)/2$ lifts of $N$. 

Any integer $k\geq 2$ can be written as $(n+1)/2$ where $n=2k-1 \geq 3$. To prove the theorem, it suffices to show that for any odd integer $n$ there exist infinitely many primes $p$ such that $n$ divides $2p+1$. Let $n=2q+1$ and thus $\gcd(q,n)=1$. By Dirichlet's theorem, the arithmetic progression $\{q+nk\}_{k=0}^\infty$ contains infinitely many primes $p_i = q + nk_i$. Note that $n$ divides $2p_i+1$ for all $i$. This completes the proof of \cref{thm:kTGSurfaces}. 
\end{proof}

\subsection{Non-elementary Fuchsian subgroups}

The proof of \cref{lem:SlopesOfTGS} gives infinitely many explicit examples of non-elementary maximal Fuchsian subgroups of infinite covolume in $\Gamma_j$. 

\begin{prop}
\label{prop:ExistenceOfBadStab}
Let $M$ be a twist knot complement with at least $3$ half-twists. Let $H_{s/t}$ be the vertical hyperplane intersecting $\bbH^2_\bbR$ at $s/t \in \bbQ$ with slope $-2/1$ at $\infty$. For all odd $t$, the stabilizer of $H_{s/t}$ in $\Gamma$ is non-elementary. 
\end{prop}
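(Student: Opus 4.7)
The plan is to produce two parabolic elements of $\Stab_\Gamma(H_{s/t})$ fixing distinct points on $\partial\bbH^3$, since any two such parabolics generate a non-elementary subgroup of $\PSL_2(\bbC)$. Recall that, by construction, the line at infinity of $H_{s/t}$ passes through $s/t$ and $\infty$ in direction $\tau-2=-4/z$, corresponding to slope $-2/1$ at $\infty$.

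For the parabolic fixing $\infty$, observe that $a^{-2}\ell$ acts on $\partial\bbH^3$ as translation by $\tau-2=-4/z$, so it fixes $\infty$ and preserves the line at infinity of $H_{s/t}$ at $\infty$; hence $a^{-2}\ell\in\Stab_\Gamma(H_{s/t})$.

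For the parabolic fixing $s/t$, I would first construct an explicit $\gamma\in\Gamma$ with $\gamma(\infty)=s/t$. Since $t$ is odd, \cref{lem:CuspsOrbits} places $s/t$ in the $\Delta$-orbit of $0$: choosing $s',t'\in\bbZ$ with $tt'+4ss'=1$, the matrix $\left(\begin{smallmatrix}t' & s\\-4s' & t\end{smallmatrix}\right)$ lies in $\rho(\Delta)\leq\Gamma$ by \cref{rmk:ElementsOfDelta}, so that
\[
\gamma \;=\; \begin{pmatrix}t' & s\\ -4s' & t\end{pmatrix}\rho(w) \;\in\; \Gamma,
\]
and $\gamma(\infty)=s/t$ because $\rho(w)(\infty)=0$ by \cref{cor:Matrixwj}. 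Multiplying out via \cref{cor:Matrixwj} gives lower-left entry $\delta = tz\mu$, and then \cref{eq:zj_muj} (i.e., $z\mu^2=-1$) yields $\delta^2=-t^2z$.

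The crucial step is showing that $\gamma a\gamma^{-1}$ also stabilizes $H_{s/t}$. A direct Taylor expansion in the coordinate $v=z-s/t$ gives that $\gamma a\gamma^{-1}$ acts near $s/t$ as the parabolic $v\mapsto v/(1-\delta^2 v)$. This Möbius transformation sends the line $\{r\xi : r\in\bbR\}$ to itself precisely when $\delta^2\xi\in\bbR$ (a short computation setting $v=r\xi$ in the formula). Taking $\xi=-4/z$, the direction of the line at infinity of $H_{s/t}$ at $s/t$, we get $\delta^2\xi = (-t^2z)(-4/z) = 4t^2 \in \bbR$. So $\gamma a\gamma^{-1}$ preserves the line at infinity of $H_{s/t}$ and therefore preserves $H_{s/t}$.

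Finally, since $a^{-2}\ell$ and $\gamma a\gamma^{-1}$ are parabolic elements of $\Gamma$ fixing the distinct boundary points $\infty$ and $s/t$, the subgroup they generate is non-elementary and is contained in $\Stab_\Gamma(H_{s/t})$, proving the proposition. The only mildly delicate step is the local computation identifying the form $v\mapsto v/(1-\delta^2 v)$ of $\gamma a\gamma^{-1}$ at $s/t$; once that is in hand, the final check using $z\mu^2=-1$ is immediate.
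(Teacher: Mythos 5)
Your proof is correct and follows essentially the same route as the paper: both pair the parabolic $a^{-2}\ell$ at $\infty$ with a conjugate of a meridian fixing $s/t$ (your $\gamma a\gamma^{-1}$ is the paper's conjugate of $b$ by an element of $\Delta$ sending $0$ to $s/t$, via the knot-group relation $waw^{-1}=b$ and up to the sign convention on $t'$), and then check that this parabolic preserves the line at infinity of $H_{s/t}$. The only cosmetic difference is that you verify this invariance through the local normal form $v\mapsto v/(1-\delta^{2}v)$ and the criterion $\delta^{2}\xi\in\bbR$, whereas the paper computes the action on the parametrized line directly; both reduce to the same identity $\delta^{2}\cdot(4/z)=\pm 4t^{2}\in\bbR$ coming from $z\mu^{2}=-1$.
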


\begin{proof}
	We observe that $\Stab_{\Gamma}(H_{s/t})$ contains a subgroup generated by 
	\begin{equation*}
		a^{-2}\ell \quad \text{and} \quad \begin{pmatrix}
		-t' & s \\ -4s' & t
		\end{pmatrix} b \begin{pmatrix}
		t & -s \\ 4s' & -t'
		\end{pmatrix}
	\end{equation*}
where $4ss' - tt' = 1$. The fact that the latter element belongs to $\Gamma_j$ follows from \cref{rmk:ElementsOfDelta}. It is clear that $a^{-2}\ell$ stabilizes $H_{s/t}$, so the line at infinity of $H_{s/t}$ has slope $-2/1$ and thus is parametrized by 
\[\frac{s}{t} + r(2-\tau) \]
for $r \in \bbR$. We have
	\begin{equation*}
		\begin{pmatrix} -t' & s \\ -4s' & t \end{pmatrix} b \begin{pmatrix} t & -s \\ 4s' & -t' \end{pmatrix}\left(\frac{s}{t} + \frac{4r}{z}\right) = \frac{s}{t} + \frac{r}{4rt^2+1}\frac{4}{z} = \frac{s}{t} + \frac{r}{4rt^2 + 1}(2-\tau)
	\end{equation*}
for any $r\in\bbR$. Since $r/(4rt^2 + 1) \in \bbR$, the image point also belongs to the line at infinity of $H_{s/t}$. Therefore, the latter element stabilizes $H_{s/t}$. 

These are independent parabolic element with respect to the action of the stabilizer of $H_{s/t}$ on $H_{s/t}$ because they have distinct fixed points. They thus generate a non-elementary subgroup of the stabilizer of $H_{s/t}$.
\end{proof}

Since all vertical lifts of $N$ are described at the end of \cref{sec:Prelim}, for $s/t \in \bbQ  \smallsetminus \bbZ$ and odd $t$, the image of $H_{s/t}$ in $M_j$ is not freely homotopic to $N$. In particular, $\Stab_{\Gamma_j}(H_{s/t})$ and $\Delta_j$ are not conjugate in $\Gamma_j$. By \cite[Theorem 1.1]{BFMS}, at most finitely many $\Gamma_j$-orbits of hyperplanes $H_{s/t}$ cover a finite area totally geodesic surface. Other quotients of these hyperplanes must be of infinite area. Furthermore, if the trace field of $M_j$ satisfies the condition in \cref{thm:UniqueThricePuncturedSphere}, then \cref{thm:UniqueThricePuncturedSphere} says that $\Delta_j$ is the unique (up to conjugacy) finite covolume maximal Fuchsian subgroup of $\Gamma_j$; that is,

\begin{cor}
\label{cor:ExistenceOfBadStab}
Let $j$ be an odd integer such that $\bbQ(z_j)$ does not contain any proper real subfield. Then for all $s/t \in \bbQ \smallsetminus \bbZ$ and odd $t$, $\Stab_{\Gamma_j}(H_{s/t})$ is a non-elementary Fuchsian group of infinite covolume.
\end{cor}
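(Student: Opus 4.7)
The plan is to combine \cref{prop:ExistenceOfBadStab}, which supplies non-elementarity, with \cref{thm:UniqueThricePuncturedSphere}, which forces infinite covolume. Non-elementarity is immediate: \cref{prop:ExistenceOfBadStab} exhibits two parabolic elements in $\Stab_{\Gamma_j}(H_{s/t})$ whose fixed points $\infty$ and $s/t$ are distinct, so the subgroup they generate, and hence the full stabilizer, is non-elementary whenever $t$ is odd, in particular for $s/t \in \bbQ \smallsetminus \bbZ$ with $t$ odd.

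For the covolume claim I would argue by contradiction. Assume that $\Stab_{\Gamma_j}(H_{s/t})$ has finite covolume. Then the quotient $H_{s/t}/\Stab_{\Gamma_j}(H_{s/t})$ is a complete, properly immersed, finite-area totally geodesic surface in $M_j$. Under the hypothesis on $\bbQ(z_j)$, \cref{thm:UniqueThricePuncturedSphere} says that $N$ is the unique such surface in $M_j$, so this quotient must be (freely homotopic to) $N$. Because the stabilizer is non-elementary it preserves a unique totally geodesic hyperplane in $\bbH^3$, and hence $H_{s/t}$ itself must be a $\Gamma_j$-translate of one of the three vertical lifts of $N$ catalogued at the end of \cref{sec:Prelim}.

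The remaining task is to show that no $H_{s/t}$ with $s/t \in \bbQ \smallsetminus \bbZ$ arises this way. Among the three families of vertical lifts, only the $\langle a,\ell_j\rangle$-orbit of $w_j^{-1}(\bbH^2_\bbR)$ has slope $-2/1$ at $\infty$, matching $H_{s/t}$. Starting from $w_j^{-1}(\infty) = -\tau_j/2$ for $j$ odd and using that the line at infinity of $w_j^{-1}(\bbH^2_\bbR)$ has direction $\tau_j - 2$, a short imaginary-part calculation (using $\tau_j = 2 - 4/z_j$ to keep $\Im(\tau_j) \neq 0$) locates its intersection with the real axis at $-1$. Translating by $a^m \ell_j^n$ shifts this intersection to $m + 2n - 1 \in \bbZ$, because the imaginary parts of $\ell_j$'s translation and of the line's direction are equal and cancel after reparametrization. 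Hence every vertical $N$-lift of slope $-2/1$ at $\infty$ meets $\bbR$ at an integer, so no $H_{s/t}$ with $s/t \in \bbQ \smallsetminus \bbZ$ is such a lift, contradicting the previous paragraph.

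The main obstacle is the bookkeeping in the last paragraph: one must track precisely how $\langle a, \ell_j\rangle$ moves the real-axis intersection point while working over $\bbC$ with the generically complex translation $\tau_j$. Everything else is a direct invocation of the two previous results together with the observation that a non-elementary Fuchsian subgroup of a Kleinian group stabilizes a unique hyperplane in $\bbH^3$.
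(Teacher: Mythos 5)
Your argument is correct and follows essentially the paper's own route: non-elementarity comes from \cref{prop:ExistenceOfBadStab}, and infinite covolume follows from \cref{thm:UniqueThricePuncturedSphere} once one checks that the vertical lifts of $N$ with slope $-2/1$ at $\infty$ meet $\bbR$ only at integers, so that $H_{s/t}$ with $s/t \in \bbQ \smallsetminus \bbZ$ is not a lift of $N$. (One harmless slip: for $j$ odd the paper has $w_j^{-1}(\infty) = -\tau_j/2 + 2$, so that lift crosses $\bbR$ at $1$ rather than $-1$, and the translates cross at $m+2n+1$; the integrality you need is unaffected.)
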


In other words, \cref{cor:ExistenceOfBadStab} provides examples of infinitely many hyperbolic $3$-manifolds that contain non-elementary  Fuchsian subgroups of infinite covolume. This behavior demonstrates a drastic difference between arithmetic and non-arithmetic hyperbolic manifolds. In an arithmetic hyperbolic $3$-manifold, every non-elementary  Fuchsian subgroup has finite covolume \cite[Theorem 4]{MaclachlanReid87}. Infinite covolume Fuchsian groups of this kind are known to exist, such as using results in \cite{FLMS}, but we do not know of any explicit examples in the literature.

\subsection{Right-angled knot complements}
\label{Right-angledKnotComplement}
We now discuss an application of our boundary slope analysis. A hyperbolic manifold is called \emph{right-angled} if it can be constructed by gluing together a set of hyperbolic polyhedra whose dihedral angles are all $\pi/2$.  Being right-angled is ever-present in $3$-dimensional topology, such as showing that a manifold is LERF \cite[Corollary 1.4]{CDW} and addressing the virtually fibered conjecture \cite[Corollary 1.2]{CDW}.

It is known the the faces of these polyhedra give rise to immersed totally geodesic surfaces \cite{CKP}.  In the case of a cusped hyperbolic $3$-manifold, at least one polyhedron must admit an ideal vertex.
The intersection of this polyhedron with a sufficiently small horosphere based at the ideal vertex must be a Euclidean right-angled polygon.  Sides of this polygon being perpendicular may be checked by looking at the angle between two intersecting hyperplanes.



There are examples in \cite{C} of knot complements that are not right-angled because they contain no totally geodesic surface. It is also known that all knots up to $11$ crossings are not right-angled because of volume obstructions \cite{CKP}.  
However, we authors do not know of any existing literature that provides an infinite list of knot complements that are not right-angled.  We give the first such infinite family.

\begin{proof}[Proof of \cref{cor:NoRightAngled}]
For $j$ odd prime, we check that the right-angled property fails by finding the angle between the real hyperplane through $0$ and the hyperplane of slope $-2/1$ through $0$. This angle is $\pi/2$ if and only if the argument of $-1+\frac{\tau}{2}$ is $\pi/2$.

\begin{figure}[H]
    \centering
    \includegraphics[height=2.5in]{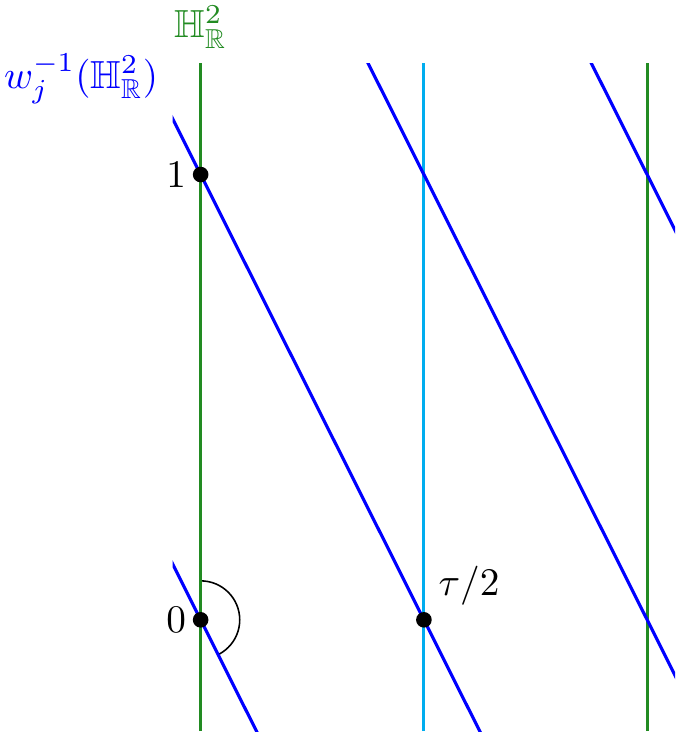}
    \caption{Vertical lifts of $N$ for $j$ odd along with the angle that must be $\pi/2$}
    \label{fig:3PVert_right}
\end{figure}

We know that $\tau=-4z^{-1}+2$, so $-1+\frac{\tau}{2} = -2z^{-1}$, whose argument is bounded away from $\pi/2$ by \cite[Theorem 1]{HS}.
Therefore, no twist knot complement of this form is right-angled.
\end{proof}

\section{Totally geodesic surfaces in the figure-8 knot complement}
\label{sec:Fig8Knot}

The figure-8 knot complement $M$ is exceptional among twist knot complements.  It is the sole arithmetic knot complement \cite{ReidThesis} and therefore admits infinitely many totally geodesic surfaces \cite{MaclachlanReid}.  Two such surfaces are shown in \cref{fig:3PPair}.  One is the previously defined $N$.  The other is a dual surface $N'$ whose boundary slopes are $1/0$ and $2/1$; this information can be calculated using the techniques in \cref{sec:Prelim} and visualized in \cref{fig:3PPair_vert}.

\begin{figure}[h]
    \centering
    \subfloat{\includegraphics[width=3in]{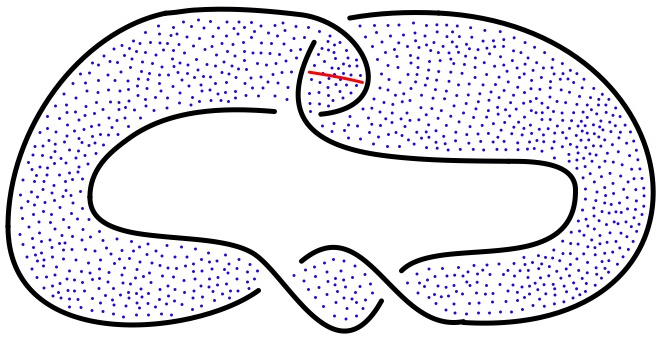}}
    \hspace{1cm}
    \subfloat{\includegraphics[width=3in]{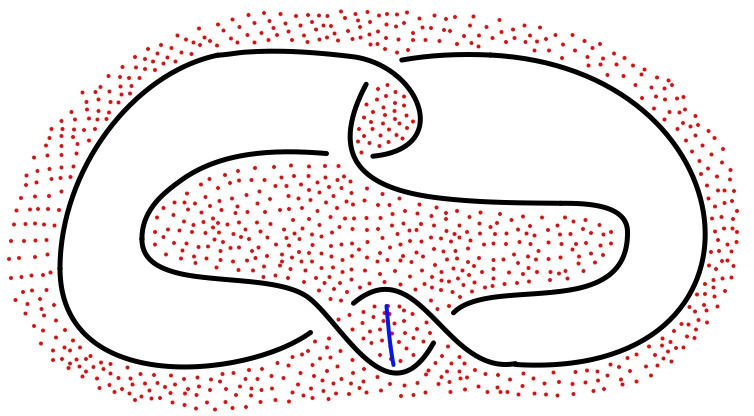}}
    \caption{Two distinct totally geodesic thrice-punctured spheres $N$ and $N'$ in the figure-8 knot complement}
    \label{fig:3PPair}
\end{figure}

\begin{figure}[h]
    \centering
    \includegraphics[height=2.5in]{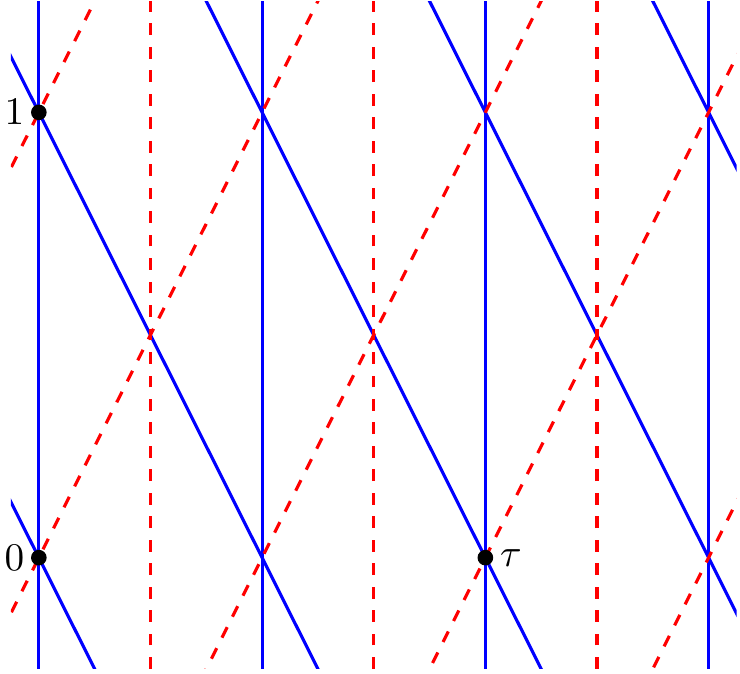}
    \caption{Vertical lifts of $N$ (blue) and $N'$ (red, dashed) for $j$ odd}
    \label{fig:3PPair_vert}
\end{figure}

The occurrence of the boundary slope $2/1$ for $N'$ is in stark contrast with the restricted set of boundary slopes available to other twist knot complements found in \cref{lem:SlopesOfTGS}.  The approach of the proof of \cref{lem:SlopesOfTGS} breaks down for the figure-8 knot, primarily because the minimal polynomial of $z$ is quadratic.  Moreover,

\begin{thm}
\label{thm:BoundSlopeQ}
Every number in $\BQ \cup \{\infty\}$ occurs as a boundary slope for some totally geodesic surface in the figure-8 knot complement.
\end{thm}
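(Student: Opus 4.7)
My plan is to exploit the arithmeticity of the figure-8 knot complement (the unique arithmetic twist knot complement) to manufacture, for every slope $p/q\in\bbQ\cup\{\infty\}$, a totally geodesic surface realizing that boundary slope. The input from earlier calculations: when $j=2$, the number $z_2$ is a root of $\Lambda_2(z)=z^2-z+1$, so $z_2=(1+\sqrt{-3})/2$ is a primitive sixth root of unity, and \cref{eq:tau} gives $\tau_2=-4/z_2+2=2\sqrt{-3}$. Hence the cusp translation lattice of $M$ is $\bbZ+2\sqrt{-3}\,\bbZ$, and the peripheral curve $a^p\ell^q$ corresponds to the translation $d_{p,q}:=p+2q\sqrt{-3}\in\bbQ(\sqrt{-3})$. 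Since $M$ is arithmetic with invariant trace field $\bbQ(\sqrt{-3})$ and split quaternion algebra $M_2(\bbQ(\sqrt{-3}))$, its commensurator in $\PSL_2(\bbC)$ is the full group $\mathrm{PGL}_2(\bbQ(\sqrt{-3}))$.

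Given coprime $p,q\in\bbZ$ (not both zero), I would introduce the vertical hyperplane $H_{p,q}\subset\bbH^3$ whose boundary line in $\bbC$ is $\bbR\cdot d_{p,q}$. The element $g_{p,q}=\begin{pmatrix} d_{p,q} & 0 \\ 0 & 1 \end{pmatrix}\in\mathrm{PGL}_2(\bbQ(\sqrt{-3}))$ satisfies $g_{p,q}(\bbH^2_\bbR)=H_{p,q}$; the case $(p,q)=(1,0)$ simply recovers $\bbH^2_\bbR$ and hence $N$. For general $(p,q)$, I would argue that $H_{p,q}$ projects to a properly immersed, finite-area totally geodesic surface $\Sigma_{p,q}\subset M$ as follows: the conjugate $g_{p,q}\Delta g_{p,q}^{-1}$ of the Fuchsian thrice-punctured-sphere group $\Delta=\pi_1(N)$ from \cref{prop:ThricePuncturedSphere} is a finite-covolume Fuchsian group stabilizing $H_{p,q}$; since $g_{p,q}$ commensurates $\Gamma$, the intersection $g_{p,q}\Gamma g_{p,q}^{-1}\cap\Gamma$ has finite index in each factor, and intersecting with $g_{p,q}\Delta g_{p,q}^{-1}$ yields a finite-index subgroup of $g_{p,q}\Delta g_{p,q}^{-1}$ sitting inside $\Stab_\Gamma(H_{p,q})$. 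Therefore $\Stab_\Gamma(H_{p,q})$ is Fuchsian of finite covolume, and $H_{p,q}$ descends to the desired $\Sigma_{p,q}$.

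To read off the boundary slope at the cusp $\infty$, I would observe that any $a^m\ell^n\in\Stab_\Gamma(H_{p,q})$ must translate $\partial H_{p,q}$ into itself, so $m+2n\sqrt{-3}\in\bbR\cdot d_{p,q}$; matching real and imaginary parts forces $(m,n)\in\bbZ\cdot(p,q)$, so the primitive peripheral stabilizer of $H_{p,q}$ is $a^{\pm p}\ell^{\pm q}$ and the boundary slope of $\Sigma_{p,q}$ at $\infty$ is exactly $p/q$. Letting $(p,q)$ range over coprime integer pairs (with $(1,0)$ producing $\infty$) then exhausts $\bbQ\cup\{\infty\}$.

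The principal obstacle is the passage from a Fuchsian subgroup of the commensurator to a finite-covolume Fuchsian subgroup sitting inside $\Gamma$ itself. This is the classical commensurator intersection argument, and it works here only because $M$ is arithmetic, which is precisely what fails for $M_j$ with $j\geq 3$ and explains the highly restrictive slope set obtained in \cref{lem:SlopesOfTGS}. Beyond this, everything is a direct computation with the explicit cusp shape $\tau_2=2\sqrt{-3}$.
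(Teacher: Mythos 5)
Your proposal is correct, and it takes a genuinely different route from the paper. The paper stays inside $\Gamma$ from the start: it produces the two parabolics $a^p\ell^q$ and $wa^m\ell^n w^{-1}$ and observes that the trace condition \cref{eq:TraceCondition} reduces, because $z^2=z-1$, to the single linear equation $16nq=(2n+m)(2q+p)$, which is solvable in $(m,n)$ for every $(p,q)$ precisely because the minimal polynomial of $z$ is quadratic; the resulting group has real traces, hence is a non-elementary Fuchsian subgroup of $\Gamma$, and Maclachlan--Reid \cite[Theorem 4]{MaclachlanReid87} then enlarges it to a finite-coarea Fuchsian stabilizer of a hyperplane with slope $p/q$ at $\infty$. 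You instead start from the known finite-covolume group $\Delta=\pi_1(N)$, push it onto the target hyperplane $H_{p,q}$ by the commensurating element $g_{p,q}=\mathrm{diag}(d_{p,q},1)\in\mathrm{PGL}_2(\bbQ(\sqrt{-3}))$, and use the finite-index intersection $g_{p,q}\Delta g_{p,q}^{-1}\cap\Gamma\leq\Stab_\Gamma(H_{p,q})$ to conclude directly that the stabilizer is a lattice. Both arguments hinge on arithmeticity, but through different theorems: the paper through the statement that every non-elementary Fuchsian subgroup of an arithmetic Kleinian group has finite coarea, you through Borel's identification of the commensurator as $\mathrm{PGL}_2(\bbQ(\sqrt{-3}))$. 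Your version buys an explicit hyperplane realizing each slope and bypasses the trace computation entirely; the paper's version is shorter given the machinery already built for \cref{lem:SlopesOfTGS}. Two small points to tighten: you should note that a finite-index subgroup of $g_{p,q}\Delta g_{p,q}^{-1}$ meets the peripheral subgroup $\langle g_{p,q}ag_{p,q}^{-1}\rangle$ at $\infty$ in a finite-index subgroup, so $\Stab_\Gamma(H_{p,q})$ really does contain some $a^{kp}\ell^{kq}$ with $k\geq 1$ and $\infty$ is a cusp of the quotient surface; and the primitive peripheral element of $\Stab_\Gamma(H_{p,q})$ at $\infty$ may a priori be a proper power $a^{kp}\ell^{kq}$ rather than $a^{\pm p}\ell^{\pm q}$ itself, though the boundary slope is $p/q$ either way.
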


\begin{proof}
Let $a^p \ell^q$ be a parabolic element fixing $\infty$.  We can find $w a^m \ell^n w^{-1}$ such that the trace condition \cref{eq:TraceCondition} is satisfied.
As computed in the proof of \cref{lem:SlopesOfTGS}, we must satisfy
\begin{equation*}
16 nq + (2n+m)(2q+p) z^2 \in \BQ z
\end{equation*}
Since the minimal polynomial of $z$ is $z^2-z+1=0$, satisfying the above is equivalent to satisfying
\begin{equation*}
16 nq - (2n+m)(2q+p) = 0
\end{equation*}
For any pair $(p,q)$, there exists such a pair $(m,n)$.  Thus the group $\Delta$ generated by $a^p \ell^q$ and $w a^m \ell^n w^{-1}$ is a non-elementary Fuchsian subgroup.  Because the figure-8 knot is arithmetic, every non-elementary Fuchsian subgroup in $\pi_1(M)$ sits inside a finite coarea arithmetic Fuchsian subgroup of $\pi_1(M)$ \cite[Theorem 4]{MaclachlanReid87}. Thus there exists $\Delta' \supset \Delta$ such that $\Delta'$ is the stabilizer of a hyperplane which corresponds to a totally geodesic surface in the figure-8 knot complement admitting boundary slope $p/q$.
\end{proof}


\section{Computer experiments and questions}
\label{sec:ComputerExperimentsAndQuestions}

\subsection{Trace fields of twist knots}

The condition that the trace field $\bbQ(z_j)$ of $K_j$ for an odd prime $j$ has no proper real subfield is used frequently in the proof of \cref{thm:UniqueThricePuncturedSphere} and the lemmas leading up to it. Using Magma, we observe that for $1 < j < 100$, the trace field $\bbQ(z_j)$ has no proper subfield except for $\bbQ$, see \cite{Codes} for the code. The following question arises naturally:

\begin{quest}
\label{ques:NoProperSubfield}
Is $\bbQ$ the unique proper subfield of the trace field $\bbQ(z_j)$ of $K_j$?
\end{quest}

We also checked that the Galois group of the Riley polynomial $\Gal(\Lambda_j)$ is congruent to $S_j$, the full symmetric group on $j$ letters for $1< j \leq 99$, see \cite{Codes}. This implies that there are no proper subfields between $\bbQ(z_j)$ and $\bbQ$ since a subgroup of index $j$ in $S_j$ is necessarily maximal. Therefore, it is natural to ask: 

\begin{quest}
\label{quest:GaloisGroup}
Is $\Gal(\Lambda_j) \cong S_j$ for all $j$? 
\end{quest}

We conjecture that the answer is yes to both questions. By \cref{thm:UniqueThricePuncturedSphere}, an affirmative answer to either question implies that $N$ is the unique totally geodesic surface in $M_j$ all for odd $j$. 

Finally, using Magma we compute the class number of $\bbQ(z_j)$ and observe that it is $1$ for all $2 \leq  j \leq 19$, see \cite{Codes}. As a consequence of the argument in the proof of \cref{thm:UniqueThricePuncturedSphere}, the cusp points of $\Gamma_j$ form a proper subset of the trace field $\bbQ(z_j)$. In other words, for odd integer $2\leq j \leq 19$, the group $\Gamma_j$ gives an example of a Kleinian group whose trace field has class number $1$ and contains cusp points as a proper subset. Examples of this phenomenon are only known to exist for Kleinian groups whose trace field has class number strictly bigger than 1 \cite[Theorem 6.1]{LR}. One may ask if the class number of $\bbQ(z_j)$ is $1$ for all $j$. However, the class number of $\BQ(z_{20})$ is computed to be $2$, see \cite{Codes}. 

\subsection{Homological behavior of $N$ in congruence covers}

We study the homological behavior of totally geodesic surfaces in congruence covers of $\Gamma_j$. This study is motivated by the analogy between totally geodesic surfaces in (arithmetic) hyperbolic manifolds and generic hyperplane sections in the projective variety described in $\cite{B}$. For a (complex) projective variety $M$ of dimension $n$, the Lefschetz Hyperplane Theorem says that the map
\[ H_i(M \cap H) \to H_i(M)\]
is injective for all $i \geq n$ and for a generic hyperplane $H$. Replacing "hyperplane section" by "totally geodesic surface in hyperbolic $3$-manifold", one can ask similar questions about the behavior of the induced map between the first homology of totally geodesic surfaces and the ambient hyperbolic $3$-manifold. 

To describe our question, we need to set up some notations. Let $\mathcal{O}_j$ be the ring of integers of $\bbQ(z_j)$ and identify $\Gamma_j$ with its image in $\PSL_2(\mathcal{O}_j)$. Let $\Gamma_j(I) := \ker(\Gamma_j \to \PSL_2(\mathcal{O}_j/I))$ be the principal congruence cover of level $I$ for some ideal $I$ and $M_j(I)$ be the cover of $M_j$ that corresponds to $\Gamma_j(I)$. Let $N_1,\dots,N_m$ be the lifts of $N$ in $M_j(I)$. 

\begin{quest}
What are the possible behaviors of the map
\[H_1(N_1 \cup \dots \cup N_m;\bbZ) \to H_1(M_j(I);\bbZ)\]
as $I$ varies over the prime ideals in $\mathcal{O}_j$? 
\end{quest}

We first investigate this question when $I$ is a prime with residue degree $1$, i.e.~when $\mathcal{O}_j/ I \cong \mathbb{F}_p$, a finite field of prime order $p$. Suppose that $\mathfrak{p}$ is a prime ideal in $\mathcal{O}_j$ lying above an odd prime $p$ such that $\mathcal{O}_j/\mathfrak{p} \cong \mathbb{F}_p$. In \cite[Proposition 4]{Riley}, Riley showed that the representation $\Gamma_j \to \PSL_2(\mathbb{F}_p)$ coming from the reduction map is surjective for all odd prime $p$. For all odd prime $p$, $\Delta_j$ maps onto the group 
\[\left\langle \begin{pmatrix}
1 & 1 \\ 0 & 1 \\
\end{pmatrix}, \begin{pmatrix}
-1 & 0 \\ 4 & -1 \\
\end{pmatrix} \right\rangle = \PSL_2(\mathbb{F}_p).\] 
under the reduction map
Therefore for any prime ideal $\mathfrak{p}$ with residue degree $1$ lying above an odd prime $p$, $M_j(\mathfrak{p})$ contains exactly one unique lift of the totally geodesic thrice-puncture sphere, and this cover has fundamental group $\Delta_j(\mathfrak{p}) = \Delta \cap \Gamma_j(\mathfrak{p})$. For all examples that we computed with Magma, we observed that $H_1(\Delta_j(\mathfrak{p}))$ maps onto $H_1(\Gamma_j(\mathfrak{p}))$ under the induced map for prime ideals $\mathfrak{p}$ of residue degree $1$ lying above an odd prime $p$. In particular, we verified that $H_1(\Delta_j(\mathfrak{p}))$ surjects $H_1(\Gamma_j(\mathfrak{p}))$ for $3 \leq j \leq 20$ that has a prime ideal of residue degree $1$ lying above $3$ and $5$, see \cite{Codes}.

\begin{quest}
Is the induced map $H_1(\Delta_j(\mathfrak{p})) \to H_1(\Gamma_j(\mathfrak{p}))$ surjective for all prime ideals $\mathfrak{p}$ of residue degree 1 lying above odd prime $p$?  
\end{quest}


Finally, we record some observations of patterns in the first homology of these principal congruence covers in \cref{tab:HomologyOfPrincipalCongruenceCovers}. In the table, we denote the homology group 
\[\bbZ^r_0\oplus (\bbZ/a_1\bbZ)^{r_1} \oplus \dots \oplus (\bbZ/a_s\bbZ)^{r_s}\]
by $[0^{r_0},a_1^{r_1},\dots,a_s^{r_s}]$ where $a_1,\dots,a_s$ are the invariant factors. Furthermore, repetition of a prime $p$ indicates a prime with different prime ideals lying above of residue degree 1. See , see \cite{Codes} for the code used to produce the table.

\begin{table}[h!]
   \centering
   \begin{tabular}{|c|c|c|c|}
   \hline
    $p$ & $j$ & $k$ & $H_1(\Gamma_j(\mathfrak{p}))$   \\
   \hline
   3 &  $4k+1$ & $1 \leq k \leq 13$ & $[0^4,(3k+1)^2]$ \\
   \hline
   5 & $10k + 1$ & $1 \leq k \leq 4$ & $[0^{12},(6k+1)^5]$\\
    \hline
   5 & $10k + 3$ & $1 \leq k \leq 4$ & $[0^{17},(3k+1)^4,6k+2]$\\
    \hline
    \end{tabular}
    \caption{Homology of Principal Congruence Covers}
    \label{tab:HomologyOfPrincipalCongruenceCovers}
\end{table}

\subsection{Finding surfaces in the figure-8 knot complement}

Given the linear relation between $p/q$ and $m/n$ that is described in both \cref{lem:SlopesOfTGS} and the proof of \cref{thm:BoundSlopeQ}, we ask how restrictive such relations between boundary slopes must be:

\begin{quest}
Can any pair of numbers in $\BQ \cup \{\infty\}$ occur simultaneously as boundary slopes for some totally geodesic surface of $M$?
\end{quest}

We give some computed examples of totally geodesic surfaces $\Sigma$ in $M$ below. We identify these by specifying a vertical hyperplane in $\bbH^3$ and its stabilizer. These hyperplanes have the form $\bbH(s/t,p/q)$ where $0 \leq s/t < 1$ is the point of intersection with $H^2_{\BR}$ and $p/q$ is the slope of the vertical plane at infinity. We denote the stabilizers of these hyperplanes at infinity by $\Delta(s/t,p/q)$. 
 
\begin{example}
The plane $\bbH(0,1/0)$ is $\bbH^2_{\bbR}$ and has stabilizer 
\begin{equation*}
\Delta(0,1/0) 
= \langle a, {}^{awa^{-1}w}a \rangle 
= \left\langle \begin{pmatrix} 1 & 1 \\ 0 & 1 \end{pmatrix}, \begin{pmatrix} -1 & 1 \\ -4 & 3 \end{pmatrix} \right\rangle
\end{equation*}
This is the thrice-punctured sphere $N$.
\end{example}

\begin{example}
The plane $\bbH(0,2/1)$ has stabilizer
\begin{equation*}
\displaystyle\Delta(0,2/1) 
\supseteq \langle a^2\ell, {}^{ab^{-1}} a \rangle 
= \left\langle \begin{pmatrix} -1 & -4z \\ 0 & -1 \end{pmatrix}, \begin{pmatrix} 2 & -z \\ -z+1 & 0 \end{pmatrix} \right\rangle
\end{equation*}
Note that
\begin{equation*}
\begin{pmatrix} -1 & -4z \\ 0 & -1 \end{pmatrix} \begin{pmatrix} 2 & -z \\ -z+1 & 0 \end{pmatrix} = 
\begin{pmatrix}
2 & z\\ z-1 & 0
\end{pmatrix}
\end{equation*}
This is the thrice-punctured sphere $N'$.  
\end{example}

Both $N$ and $N'$ admit precisely two boundary slopes, but that is not necessarily true for all totally geodesic surfaces:


\begin{example}
The plane $\bbH(0,0)$ has stabilizer 
\begin{align*}
\Delta(0,0) 
& \supseteq \langle \ell,  {}^{w}a^6\ell, {}^{\ell w^{-1}}a^6\ell, {}^{\ell w^{-1}}\ell\rangle
\\
& \phantom{\supseteq} = \left\langle \begin{pmatrix} -1 & -4z + 2 \\ 0 & -1 \end{pmatrix}, \begin{pmatrix} -1 & 0 \\ -8z + 4 & -1\end{pmatrix}, 
\begin{pmatrix} 11 & -24z + 12 \\ -8z + 4 & -13  \end{pmatrix} \right\rangle
\end{align*}
The set of boundary slopes of $\Delta(0,0)$ contains but may not be limited to $\{0,6/1\}$.
\end{example}

\begin{example}
The plane $\bbH(0,1/1)$ has stabilizer
\begin{align*}
\Delta(0,1/1)
& \supseteq \langle a\ell, {}^{w}a^{10}\ell^3,  {}^{\ell v}a\ell^{-1} \rangle \\
& \phantom{\supseteq} = \left\langle \begin{pmatrix} -1 & -4z + 1 \\ 0 & -1 \end{pmatrix}, \begin{pmatrix} -1 & 0 \\ -16 z + 12 & -1 \end{pmatrix} \begin{pmatrix} 25 & -52z + 13 \\ -16z + 12 & -27\end{pmatrix} \right\rangle
\end{align*}
The set of boundary slopes of $\Delta(0,1/1)$ contains but may not be limited to $\{1/1,10/3,-1/1\}$. 
\end{example}

Given the existence of sets of boundary slopes that are not precisely two numbers in $\BQ \cup \{\infty\}$, we ask:

\begin{quest}
Does there exist a totally geodesic surface of $M$ admitting exactly $1$ boundary slope? arbitrarily many boundary slopes?
\end{quest}


\appendix

	\bigskip
	\bigskip
	
	\noindent\textsc{Department of Mathematics,
		Temple University, \\
		Wachman Hall Rm. 638 \\
		1805 N. Broad St \\
		Philadelphia PA, 19122 USA \\
		\emph{E-mail addresses:} {\bf khanh.q.le@temple.edu}, {\bf rebekah.palmer@temple.edu},}


\begin{thebibliography}{99}
	
	\bibitem{A85} C.~Adams,
	\emph{Thrice-punctured spheres in hyperbolic $3$-manifolds}, 
	Trans. Am. Math. Soc. \textbf{287} (1985) 645--656. 
	
	\bibitem{A05} C.~Adams, E.~Schoenfeld, 
	\emph{Totally geodesic Seifert surfaces in hyperbolic knot and link complement I},
	Geom. Dedicata \textbf{116} (2005) 237--247.
	
	\bibitem{BFMS} U.~Bader, D.~Fisher, N.~Miller, M.~Stover, 
	\emph{Arithmeticity, superrigidity, and totally geodesic submanifolds},
	Ann. of Math. \textbf{193} (2021) 837--861.
	
	\bibitem{B} N. Bergeron, 
	\emph{Lefschetz properties for arithmetic real and complex hyperbolic manifolds},
	Int. Math. Res. Not. \textbf{20} (2003) 1089--1122.
	
	\bibitem{Magma} W.~Bosma, J.~Cannon, and C.~Playoust, \emph{The Magma algebra system. I. The user language}, J. Symbolic Comput., \textbf{24} (1997), 235–265.
	
	
	\bibitem{C} D.~Calegari, 
	\emph{Real places and torus bundles},
	Geom. Dedicata \textbf{118} (2006) 209--227.
	
	\bibitem{CKP} A.~Champanerkar, I.~Kofman, J.~Purcell,
	\emph{Right-angled polyhedra and alternating links},
    arXiv:1910.13131 (2019).
	
	\bibitem{CDW} E.~Chesebro, J.~Deblois, H.~Wilton,
	\emph{Some virtually special hyperbolic $3$-manifold groups}, European Mathematical Society, \textbf{87} (2012) 727--787.
	
	\bibitem{FLMS} D.~Fisher, J.~Lafont, N.~Miller, M.~Stover, 
	\emph{Finiteness of maximal geodesic submanifolds in hyperbolic hybrids},
	J. Eur. Math. Soc. (JEMS) \textbf{23} (2021), no. 11, 3591–3623.
		
	\bibitem{HS} J.~Hoste, P.~Shananhan,
	\emph{Trace fields of twist knots},
	J. Knot Theory Ramifications \textbf{10(4)} (2001) 625--639.
	
	\bibitem{LR} D.~Long, A.~Reid, \emph{Fields of definition of canonical curves}, Contemporary Math. \text{541}, A.M.S. Publications, (2011) 247--257.
	
	\bibitem{MM} G.~Margulis, A.~Mohammadi,
	\emph{Arithmeticity of hyperbolic $3$-manifolds containing infinitely many totally geodesic surfaces},
	Ergodic Theory and Dynamical Systems,  (2021) 1--32.
	
	\bibitem{MaclachlanReid} C.~Maclachlan, A.~Reid, 
	\emph{The Arithmetic of Hyperbolic $3$-manifolds}, 
	GTM \textbf{219}, Springer-Verlag New York Inc. (2003).
	
	\bibitem{MaclachlanReid87} C.~Maclachlan, A.~Reid, 
	\emph{Commensurability classes of arithmetic Kleinian groups and their Fuchsian subgroups},
	Math. Proc. Camb. Phil. Soc. \textbf{102} (1987) 251--258.
	
	\bibitem{MenascoReid} W.~Menasco, A.~Reid,
	\emph{Totally geodesic surfaces in hyperbolic link complements},
	Topology \textbf{90} (1992) 215--226.
	
	\bibitem{Reid} A.~Reid, 
	\emph{Totally geodesic surfaces in hyperbolic $3$-manifolds},
	Proc. Edinburgh Math. Soc. \textbf{(2) 34} (1991) no.~1, 77--88
	
	\bibitem{ReidThesis} A.~Reid,
	\emph{Arithmeticity of knot complements},
	J. London Math. Soc. \textbf{43} (1991), 171--184.
	
	\bibitem{Riley} R.~Riley, 
	\emph{Parabolic representations of knot groups I},
	Proc. L.M.S. \textbf{24} (1972) 217--242.
	
	\bibitem{Codes} \textbf{aux/*} in the source of the arXiv version of this
    preprint [2009.04637].
	
    \bibitem{sage}
    \emph{{S}ageMath, the {S}age {M}athematics {S}oftware {S}ystem ({V}ersion
     8.4)}, The Sage Developers, 2018, {\tt https://www.sagemath.org}.
     
\end{thebibliography}
\end{document}